\newtheorem{theorem}{Theorem}[section]
\newtheorem{lemma}[theorem]{Lemma}
\newtheorem{proposition}[theorem]{Proposition}
\theoremstyle{definition}
\newtheorem{definition}[theorem]{Definition}
\newtheorem{remark}[theorem]{Remark}
\newtheorem{example}[theorem]{Example}
\newenvironment{thm}{\begin{theorem}}{\end{theorem}}
\newenvironment{prop}{\begin{proposition}}{\end{proposition}}
\newenvironment{rem}{\begin{remark}}{\end{remark}}
\numberwithin{equation}{section}
\begin{document}

\baselineskip=17pt

\title{Multiple Mertens theorems for arithmetic progressions}

\author{Zhen Chen\\
School of Mathematics\\
South China University of Technology\\
Guangzhou 510640, Guangdong, China\\
E-mail: 759930269@qq.com
\and
Junrong Luo\\
School of Mathematics and Statistics\\
Wuhan University\\
Wuhan 430072, Hubei, China\\
E-mail: junrong\_luo@whu.edu.cn} 

\date{}
\maketitle

\renewcommand{\thefootnote}{}
\footnote{2020 \emph{Mathematics Subject Classification}: Primary 11N13; Secondary 11N05, 11N37.}
\footnote{\emph{Key words and phrases}: Analytic number theory, distribution of prime numbers, Mertens theorem, arithmetic progressions, Siegel-Walfisz theorem.}
\renewcommand{\thefootnote}{\arabic{footnote}}
\setcounter{footnote}{0}

\begin{abstract}
We establish asymptotic formulas for sums of reciprocals of primes in arithmetic progressions, generalizing recent results on multiple Mertens evaluations by Tenenbaum, Qi, and Hu. Specifically, for any fixed constant $K>0$, we derive asymptotic expansions for the sums
$
\sum_{\substack{p_1\cdots p_n\leq x \\ p_i\equiv h_i \pmod{m_i} \\ i=1,\dots, n}}\frac{1}{p_1\cdots p_n} 
$ 
and the corresponding log-weighted sums. A key feature of our results is that the error terms hold \emph{uniformly} for moduli satisfying $m_i \le (\log x)^K$, a range accessible via the Siegel-Walfisz theorem. Furthermore, we identify the coefficients of the asymptotic expansion with the Taylor series of the reciprocal Gamma function, $1/\Gamma(z)$, providing a structural explanation for the lower-order terms.
\end{abstract}
\section{Introduction}

The classical Mertens theorems describe the asymptotic behavior of the sums of reciprocals of primes. In 1874, Mertens established the following asymptotic formulas (see \cite[Theorem 4.10 and Theorem 4.12]{ref1}, \cite{ref11}):
\begin{equation}\label{eq.mertens1}
    \sum_{p\leq x}\frac{\log p}{p}=\log x+O(1),
\end{equation}
and
\begin{equation}\label{eq.mertens2}
    \sum_{p\leq x}\frac{1}{p}=\log(\log x)+A+O\left(\frac{1}{\log x}\right),
\end{equation}
where $A$ is the Mertens constant. These estimates are fundamental to analytic number theory and play a crucial role in the elementary proof of the Prime Number Theorem.

Recently, the study of the distribution of products of primes has garnered significant attention. Early investigations into sums over products of two primes were conducted by Nathanson \cite{ref12}, who derived asymptotic formulas for the case $n=2$. Generalizing these results, Popa \cite{ref15,ref16} obtained asymptotic evaluations for sums over products of three primes using the Dirichlet hyperbola method. More generally, Tenenbaum \cite{ref27,ref23} applied the Selberg-Delange method to derive asymptotic formulas for $$\sum_{p_1\cdots p_k \leq x} (p_1\cdots p_k)^{-1}$$ for any $k \ge 1$, expressing the main terms via polynomials of $\log(\log x)$. Subsequently, Qi and Hu \cite{ref17} recovered and extended these results using elementary methods. We also mention that related estimates for sums over 'almost primes' and generalized constants have been investigated by Saidak \cite{ref21}, Wolf \cite{ref24}, and more recently by Bayless, Kinlaw, and Lichtman \cite{ref3,ref8,ref9}.

While generalized Mertens theorems provide a comprehensive picture for sums over all primes, the restriction to arithmetic progressions introduces non-trivial analytic challenges, particularly regarding uniformity. A direct application of elementary methods (as in \cite{ref17}) typically yields error terms that degrade rapidly as the modulus $m$ increases. To obtain estimates that remain effective uniformly for moduli $m \le (\log x)^K$, one must rigorously control the contribution of non-principal characters and the distribution of zeros of Dirichlet $L$-functions. This necessitates the use of the Siegel-Walfisz theorem to handle the potential existence of exceptional zeros, a subtlety absent in the case of unrestricted prime sums.

In this paper, we extend the multiple Mertens evaluations to the case of arithmetic progressions. By combining the elementary Dirichlet hyperbola method with the orthogonality of Dirichlet characters, we establish asymptotic formulas for multiple prime sums in arithmetic progressions. Crucially, we employ the classical Siegel-Walfisz theorem to control the contribution of non-principal characters, ensuring that our results hold uniformly for moduli growing polylogarithmically with $x$.

Our main result is the following theorem, which generalizes the results of \cite{ref17, ref23} to arithmetic progressions with uniform error terms.

\begin{thm}\label{thm1.4}
    Let $K > 0$ be an arbitrary fixed constant. Suppose $h_i, m_i$ are integers such that $(h_i,m_i)=1$ and $m_i \le (\log x)^K$ for all $i=1, 2,\dots, n$. Then we have 
    \begin{equation}
        \begin{aligned}\label{eq2}
            \sum\limits_{\substack{p_1\cdots p_n\le x
                    \\p_i\equiv h_i \pmod{m_i}}}
            \frac{\prod_{i=1}^{n}\varphi(m_i)}{p_1\cdots p_n}
            =\tau_n^{(n)}(x)+\sum\limits_{j=0}^{n-2}a_{n-j}\tau_{j}^{(n)}(x)+O\left(\frac{\log^{n-1}(\log x)}{\log x}\right).
        \end{aligned} 
    \end{equation}
    Here, the coefficients $\tau_j^{(n)}(x)$ are determined by the polynomial expansion
    \[ \prod_{i=1}^{n}(X+y_i)=\sum_{i=0}^{n}\tau_i^{(n)}(x)X^{n-i}, \]
    where $y_i=\log(\log x)+\varphi(m_i)A_{h_i,m_i}$, and $A_{h,m}$ denotes the Mertens constant associated with the arithmetic progression $h \pmod m$ (as defined in Theorem \ref{thm1.1}). The sequence $a_k$ is defined recursively by  $a_0 = 1$, $a_1 = 0$, $a_2=-\zeta(2)$, $a_3=2\zeta(3)$, and for $k\ge4$:
    \[
    a_k=(-1)^{k-1}(k-1)!\zeta(k)+\sum\limits_{i=1}^{k-3}(-1)^i \binom{k-1}{i} i!\zeta(i+1)a_{k-1-i}.
    \]
    The implied constant in the $O$-term depends only on $n$ and $K$.
\end{thm}

\begin{rem}\label{remark1}
    \textbf{(Uniformity and Siegel Zeros)} The condition $m_i \le (\log x)^K$ is imposed to utilize the Siegel-Walfisz theorem to obtain uniform estimates, which guarantees that the contribution from non-principal characters is negligible (see \cite{ref26} for a comprehensive discussion on exceptional zeros). If one assumes the Generalized Riemann Hypothesis (GRH), this range of uniformity can be significantly extended to $m_i \le x^{1/2-\epsilon}$. It is worth noting that while the Bombieri-Vinogradov theorem (see \cite[Chapter 28]{ref4}) establishes equidistribution for moduli up to $x^{1/2-\epsilon}$ in an \emph{average} sense, our evaluations require \emph{pointwise} estimates for specific sequences of moduli. Consequently, unconditionally, the current result represents the standard range of uniformity for such multiple prime sums.
\end{rem}
Theorem \ref{thm1.4} in the case of $n=1$ corresponds to the classical result for arithmetic progressions (see \cite{ref4}):
\begin{thm}\label{thm1.1}
    Suppose $K > 0$ is fixed. For $m \le (\log x)^K$ and $(h,m)=1$, we have
    \begin{align*}
        \sum\limits_{\substack{p\le x \\ p\equiv h \pmod m}}\frac{1}{p}
        =\frac{\log(\log x)}{\varphi(m)}+A_{h,m}
        +O\left(\frac{1}{\log x}\right).        
    \end{align*}
\end{thm}

Additionally, as an intermediate step, we derive the following estimate for the log-weighted sum:
\begin{thm}\label{thm1.3}
    Suppose $h_i, m_i \in \mathbb{N}$ with $(h_i, m_i)=1$ for $i \in \{1, \dots, n-1\}$. Let $y = \log(\log x)$. We define the \emph{associated polynomial} $\mathcal{Q}_{n-1}(Y)$ of degree $n-1$ as the convolution of the arithmetic progression coefficients $\mu_j^{(n-1)}$ (see Definition \ref{rek1}) and the coefficients $a_t$ defined in Theorem \ref{thm1.4}:
    \begin{equation}\label{eq:poly_def}
        \mathcal{Q}_{n-1}(Y) = \sum_{j=0}^{n-1} \mu_j^{(n-1)} \sum_{t=0}^{n-1-j} \binom{n-1-j}{t} a_t Y^{n-1-j-t}.
    \end{equation}
    Then for every natural number $k$, we have the asymptotic expansion:
    \begin{equation}\label{eqthm1.3}
        \begin{aligned}
             \sum_{\substack{p_1 \cdots p_{n-1} \le x \\ p_i \equiv h_i \pmod{m_i}}} & \frac{\prod_{i=1}^{n-1}\varphi(m_i) \log^k(p_1 \cdots p_{n-1})}{p_1 \cdots p_{n-1}} \\
            &= \log^k x \sum_{h=1}^{n-1} \frac{(-1)^{h-1}}{k^h} \mathcal{Q}_{n-1}^{(h)}(y) + O\left( \log^{k-1} x (\log \log x)^{n-1} \right),
        \end{aligned}
    \end{equation}
    where $\mathcal{Q}_{n-1}^{(h)}(y)$ denotes the $h$-th derivative of $\mathcal{Q}_{n-1}(Y)$ at $Y=y$.
\end{thm}
Finally, Section 4 provides an analytic perspective on the coefficients emerging from our inductive arguments. We identify the exponential generating function of the recursive sequence $a_k$ with the modified reciprocal Gamma function, $e^{-\gamma z}/\Gamma(1+z)$, which clarifies the structural origin of the lower-order terms. Furthermore, we demonstrate that the symmetric polynomials governing the main terms, $\tau_n^{(n)}(x)$, arise naturally from the singularity structure of the associated Dirichlet series at $s=1$. These analyses elucidate the consistency between our elementary evaluations and the general predictions of the Selberg-Delange method \cite{ref_delange, ref_selberg}.

\section{Preliminaries}

In this section, we gather the necessary tools and auxiliary results required for the proof of Theorem \ref{thm1.4}. We begin with general summation methods and then proceed to specific estimates involving Dirichlet characters and polylogarithms.

\subsection{General Summation Methods}

We first recall the multidimensional Dirichlet hyperbola method, which allows us to decompose multiple sums into manageable parts.

\begin{lemma}[Multiple Dirichlet Hyperbola Method, see {\cite[Proposition 2.1]{ref17}}]\label{lem1}
    Let $f_1, \cdots, f_k: \mathbb{N} \to \mathbb{C}$ be arithmetic functions. Define the summatory functions $S_{f_k}(x) = \sum_{n \le x} f_k(n)$ and $$S_{f_1, \cdots, f_{k-1}}(x) = \sum_{n_1 \cdots n_{k-1} \le x} f_1(n_1) \cdots f_{k-1}(n_{k-1}).$$ Then for any $0 < y < x$, the following identity holds:
    \begin{align*}
        \sum_{n_1 \cdots n_k \le x} f_1(n_1) \cdots f_k(n_k)
        &= \sum_{n_k \le y} f_k(n_k) S_{f_1, \cdots, f_{k-1}}\left(\frac{x}{n_k}\right) \\
        &\quad + \sum_{n_1 \cdots n_{k-1} \le \frac{x}{y}} f_1(n_1) \cdots f_{k-1}(n_{k-1}) S_{f_k}\left(\frac{x}{n_1 \cdots n_{k-1}}\right) \\
        &\quad - S_{f_k}(y) S_{f_1, \cdots, f_{k-1}}\left(\frac{x}{y}\right).
    \end{align*}
\end{lemma}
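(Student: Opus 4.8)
The plan is to reduce the $k$-fold identity to the classical two-variable Dirichlet hyperbola method by treating the product $n_1 \cdots n_{k-1}$ as a single compound variable. Concretely, introduce the Dirichlet convolution $G = f_1 * \cdots * f_{k-1}$, so that $G(m) = \sum_{n_1 \cdots n_{k-1} = m} f_1(n_1) \cdots f_{k-1}(n_{k-1})$ and $S_{f_1,\dots,f_{k-1}}(t) = \sum_{m \le t} G(m) =: S_G(t)$. Every sum occurring in the statement is a finite sum, so all regroupings and interchanges of summation are purely formal and legitimate; rewriting the left-hand side by collecting the factor $f_1(n_1)\cdots f_{k-1}(n_{k-1})$ according to the value $m = n_1\cdots n_{k-1}$ reduces the lemma to the two-variable identity
\[
\sum_{mn \le x} G(m) f_k(n) = \sum_{n \le y} f_k(n)\, S_G\!\left(\tfrac{x}{n}\right) + \sum_{m \le x/y} G(m)\, S_{f_k}\!\left(\tfrac{x}{m}\right) - S_{f_k}(y)\, S_G\!\left(\tfrac{x}{y}\right).
\]

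To prove this, I would partition the hyperbolic region $\{(m,n)\in\mathbb{N}^2 : mn \le x\}$ according to whether $n \le y$ or $n > y$. The contribution of the pairs with $n \le y$ is $\sum_{n \le y} f_k(n) \sum_{m \le x/n} G(m) = \sum_{n \le y} f_k(n) S_G(x/n)$. For a pair with $n > y$ one has $m \le x/n < x/y$, so, summing over $m$ first, its contribution is $\sum_{m < x/y} G(m) \sum_{y < n \le x/m} f_k(n) = \sum_{m < x/y} G(m)\bigl(S_{f_k}(x/m) - S_{f_k}(y)\bigr)$. Replacing the condition $m < x/y$ by $m \le x/y$ in this last expression changes nothing: the only possibly new term is $m = x/y$ (when $x/y \in \mathbb{Z}$), and it contributes $G(x/y)\bigl(S_{f_k}(y) - S_{f_k}(y)\bigr) = 0$. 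Adding the two contributions and using $\sum_{m \le x/y} G(m) = S_G(x/y)$ yields the displayed two-variable identity.

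Finally I would unwind the abbreviation $G$: one has $S_G = S_{f_1,\dots,f_{k-1}}$ by definition, and $\sum_{m \le x/y} G(m) S_{f_k}(x/m) = \sum_{n_1\cdots n_{k-1} \le x/y} f_1(n_1)\cdots f_{k-1}(n_{k-1}) S_{f_k}\bigl(x/(n_1\cdots n_{k-1})\bigr)$, which recovers exactly the formula stated in the lemma. There is essentially no analytic content in the argument, and no real obstacle: the only step demanding a little care is the bookkeeping at the boundary $n = y$, equivalently $m = x/y$ (strict versus non-strict inequalities), handled above, together with the remark that all manipulations are valid termwise because every sum is finite. Since this identity is already recorded as Proposition 2.1 of \cite{ref17}, one may alternatively just cite it, but the self-contained argument above is short enough to include in full.
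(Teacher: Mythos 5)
Your proof is correct. Note that the paper does not prove this lemma at all; it simply quotes it from Qi and Hu \cite[Proposition 2.1]{ref17}, so there is no internal argument to compare against. Your route --- collapsing $n_1\cdots n_{k-1}$ into a single variable via the convolution $G=f_1*\cdots*f_{k-1}$ and then running the classical two-variable hyperbola dissection of $\{(m,n):mn\le x\}$ along $n\le y$ versus $n>y$ --- is the standard derivation, and you handle the one delicate point (the boundary term $m=x/y$, which contributes $G(x/y)\bigl(S_{f_k}(y)-S_{f_k}(y)\bigr)=0$) correctly, so the argument is a legitimate self-contained substitute for the citation.
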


Next, we state the multiple Abel summation formula, which is essential for handling sums with logarithmic weights.

\begin{lemma}[Abel's Identity, see {\cite[Theorem 10.17]{ref5}}]\label{lem2}
    Let $a(n)$ be an arithmetic function and let $A(x) = \sum_{n \le x} a(n)$, with $A(x) = 0$ for $x < 1$. If $f$ has a continuous derivative on the interval $[y, x]$ where $0 < y < x$, then
    \begin{align*}
        \sum_{y < n \le x} a(n) f(n) = A(x) f(x) - A(y) f(y) - \int_{y}^{x} A(t) f'(t) \, dt.
    \end{align*}
\end{lemma}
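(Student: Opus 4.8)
The plan is to obtain this by classical partial summation, reducing it to a single interchange of a finite sum with an integral. Since $f$ has a continuous derivative on $[y,x]$, for every integer $n$ with $y<n\le x$ one may write $f(n)=f(x)-\int_{n}^{x}f'(t)\,dt$. Substituting this into the left-hand side and separating the two resulting pieces gives
\[
\sum_{y<n\le x}a(n)f(n)=f(x)\sum_{y<n\le x}a(n)-\sum_{y<n\le x}a(n)\int_{n}^{x}f'(t)\,dt ,
\]
and the first sum on the right equals $A(x)-A(y)$ by the definition of $A$.

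For the second term I would interchange the (finite) sum and the integral: since $n>y$ we have $\int_{n}^{x}f'(t)\,dt=\int_{y}^{x}\mathbf{1}_{[t\ge n]}f'(t)\,dt$, so the index set $\{(n,t):y<n\le x,\ n\le t\le x\}$ coincides with $\{(n,t):y<t\le x,\ y<n\le t\}$, and Fubini's theorem — valid because the sum has finitely many terms and $f'$ is bounded on the compact interval $[y,x]$ — yields
\[
\sum_{y<n\le x}a(n)\int_{n}^{x}f'(t)\,dt=\int_{y}^{x}f'(t)\Big(\sum_{y<n\le t}a(n)\Big)dt=\int_{y}^{x}\bigl(A(t)-A(y)\bigr)f'(t)\,dt .
\]
Using $\int_{y}^{x}f'(t)\,dt=f(x)-f(y)$, the right-hand side is $\int_{y}^{x}A(t)f'(t)\,dt-A(y)\bigl(f(x)-f(y)\bigr)$. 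Feeding this back, the cross-terms $-f(x)A(y)$ and $+A(y)f(x)$ cancel, and the remainder collects to $A(x)f(x)-A(y)f(y)-\int_{y}^{x}A(t)f'(t)\,dt$, which is the asserted identity.

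There is no substantive obstacle here; the whole content is endpoint bookkeeping — consistently treating $(y,x]$ as half-open, respecting the convention $A(x)=0$ for $x<1$ (which keeps the formula correct even when $y<1$), and using that the step function $A$ is constant between consecutive integers so that the Fubini interchange is elementary. As a cross-check one could instead package the argument through the Riemann--Stieltjes integral, writing $\sum_{y<n\le x}a(n)f(n)=\int_{y^{+}}^{x}f\,dA$ and applying Stieltjes integration by parts $\int f\,dA=\bigl[fA\bigr]-\int A(t)f'(t)\,dt$; or one could run the equivalent purely discrete computation, writing $a(n)=A(n)-A(n-1)$, applying the finite Abel rearrangement, and converting each difference $f(n)-f(n+1)=-\int_{n}^{n+1}f'(t)\,dt$ into $-\int A(t)f'(t)\,dt$ over $[\lfloor y\rfloor,\lfloor x\rfloor]$, after which only the two fractional end segments need to be matched against the boundary terms.
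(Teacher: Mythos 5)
Your proof is correct: the decomposition $f(n)=f(x)-\int_{n}^{x}f'(t)\,dt$, the finite sum--integral interchange giving $\int_{y}^{x}\bigl(A(t)-A(y)\bigr)f'(t)\,dt$, and the final cancellation of the $A(y)f(x)$ terms all check out, and the endpoint conventions are handled properly. The paper does not prove this lemma at all (it simply cites Everest--Ward, Theorem 10.17), and your argument is the standard textbook derivation of Abel summation, so there is nothing further to reconcile.
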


\subsection{Estimates Involving Dirichlet Characters}

A critical component of our proof is controlling the error terms arising from sums over arithmetic progressions. To ensure our estimates hold uniformly for moduli growing with $x$, we employ the classical Siegel-Walfisz theorem.

\begin{lemma}[Siegel-Walfisz Theorem, see {\cite[Chapter 11]{ref4}, \cite[Chapter 5]{ref26}}]\label{lem:sw}
    Let $K > 0$ be an arbitrary fixed constant. For any modulus $m \le (\log x)^K$ and any non-principal Dirichlet character $\chi \pmod{m}$, we have the estimate
    \begin{equation*}
        \psi(x, \chi) = \sum_{n \le x} \Lambda(n) \chi(n) = O_{K}\left(x \exp\left(-c \sqrt{\log x}\right)\right),
    \end{equation*}
    where $c > 0$ is an absolute constant and the implied constant depends only on $K$.
\end{lemma}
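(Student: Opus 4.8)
The plan is to deduce Lemma~\ref{lem:sw} from the \emph{explicit formula} for $\psi(x,\chi)$ together with two classical inputs on the zeros of Dirichlet $L$-functions: the Gronwall--Landau--Page zero-free region and Siegel's ineffective lower bound for a possible exceptional zero. First I would reduce to a primitive character: if $\chi\pmod m$ is induced by the primitive $\chi^\ast\pmod{m^\ast}$ with $m^\ast\mid m$, then $\psi(x,\chi)=\psi(x,\chi^\ast)+O\bigl((\log x)(\log m)\bigr)$, the error being harmless under $m\le(\log x)^K$. For primitive non-principal $\chi$, since $L(s,\chi)$ is entire (in particular there is no pole at $s=1$, which is why the main term $x$ present for the principal character does not appear here), the truncated explicit formula gives, for a parameter $2\le T\le x$,
\[
\psi(x,\chi)=-\sum_{|\gamma|\le T}\frac{x^{\rho}}{\rho}+O\!\left(\frac{x\log^{2}(mxT)}{T}+\log x\right),
\]
the sum being over the non-trivial zeros $\rho=\beta+i\gamma$ of $L(s,\chi)$. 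I would take $T=\exp(\sqrt{\log x})$, so that the displayed error is $\ll x\exp\bigl(-c\sqrt{\log x}\bigr)$.

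The heart of the argument is to bound $x^{\beta}$ for the zeros occurring in the sum. By the classical zero-free region there is an absolute $c_0>0$ such that $L(s,\chi)\ne0$ for $\sigma\ge 1-c_0/\log\bigl(m(|t|+3)\bigr)$, with at most one exception $\beta_1$, which if present is real, simple, and attached to a real non-principal character. For the non-exceptional zeros with $|\gamma|\le T$ one has, using $m\le(\log x)^K$ and the choice of $T$, $\log\bigl(m(|\gamma|+3)\bigr)\ll\sqrt{\log x}$, hence $\beta\le 1-c_1/\sqrt{\log x}$ and therefore $x^{\beta}\le x\exp\bigl(-c_1\sqrt{\log x}\bigr)$ uniformly. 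Combining this with the zero-counting estimate $N(T,\chi)\ll T\log(mT)$ and partial summation to bound $\sum_{|\gamma|\le T}1/|\rho|\ll\log^{2}(mT)\ll\log x$, the total contribution of the non-exceptional zeros is $\ll x\exp\bigl(-c_2\sqrt{\log x}\bigr)$.

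It remains to control a possible exceptional (Siegel) zero $\beta_1$, and this is where I expect the only genuine obstacle to lie: every other ingredient is effective, but here one must invoke Siegel's theorem, which states that for every $\varepsilon>0$ there is a (non-effective) constant $C(\varepsilon)>0$ with $1-\beta_1\ge C(\varepsilon)m^{-\varepsilon}$. Choosing $\varepsilon=1/(2K)$ and using $m\le(\log x)^K$ gives $m^{-\varepsilon}\ge(\log x)^{-1/2}$, whence $x^{\beta_1}/\beta_1\ll x\exp\bigl(-C(\varepsilon)\sqrt{\log x}\bigr)$. Assembling the three estimates yields $\psi(x,\chi)\ll_K x\exp\bigl(-c\sqrt{\log x}\bigr)$ for some $c=c(K)>0$, the dependence on $K$ being inherited from Siegel's constant; this is precisely the source of the restriction $m\le(\log x)^K$ and of the non-effectivity alluded to in Remark~\ref{remark1}. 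For the applications in the later sections one then passes from $\psi(x,\chi)$ to $\psi(x;m,h)$, and thence to $\sum_{p\le x,\,p\equiv h\,(m)}\log p$, by orthogonality of Dirichlet characters together with a standard removal of prime powers.
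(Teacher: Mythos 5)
The paper does not prove this lemma at all: it is quoted as a classical result with references to Davenport and Montgomery--Vaughan, and your argument is exactly the standard proof given in those sources (reduction to a primitive character, truncated explicit formula with $T=\exp(\sqrt{\log x})$, the classical zero-free region for the non-exceptional zeros, Siegel's theorem for a possible exceptional zero), so in substance you are reconstructing the proof the paper delegates to the literature, and the sketch is sound; minor imprecisions in the shape of the explicit-formula error term, or in the handling of a possible real zero near $s=0$, are harmless for this choice of $T$. The one point you should adjust is the treatment of the exceptional zero: with your choice $\varepsilon=1/(2K)$ you only get $x^{\beta_1}\ll x\exp\left(-C(\varepsilon)\sqrt{\log x}\right)$, so the constant in the exponent is $c=c(K)$, which — as you yourself note — is weaker than the lemma as stated, where $c$ is \emph{absolute} and only the implied constant depends on $K$. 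The fix is standard: take $\varepsilon$ strictly smaller than $1/(2K)$, say $\varepsilon=1/(4K)$, so that $1-\beta_1\ge C(\varepsilon)(\log x)^{-1/4}$ and the exceptional contribution is $\ll x\exp\left(-C(\varepsilon)(\log x)^{3/4}\right)$, which for all $x\ge x_0(K)$ is at most $x\exp\left(-c\sqrt{\log x}\right)$ with $c$ absolute, the bounded range $x<x_0(K)$ being absorbed into the ($K$-dependent, ineffective) implied constant; the same bookkeeping disposes of the $K\log\log x$ term in the zero-free region for the non-exceptional zeros. With that adjustment your proof yields the lemma exactly as stated.
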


Using Lemma \ref{lem:sw}, we can derive the following estimate for prime reciprocal sums twisted by non-principal characters.

\begin{lemma}\label{lem5}
    Let $K > 0$ be a fixed constant. Suppose $m \le (\log x)^K$ and $\chi$ is any non-principal Dirichlet character $\pmod{m}$. For any natural number $k \ge 1$, we have:
    \begin{align*}
        \sum_{p \le x} \frac{\chi(p) \log^k p}{p} = O_K(\log^{k-1} x).
    \end{align*}
\end{lemma}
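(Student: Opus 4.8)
The plan is to pass from the prime sum to a sum over prime powers so that Lemma~\ref{lem:sw} can be applied after a single partial summation. \emph{Step 1 (reduction to $\Lambda$).} On a prime power $n=p^{\nu}$ one has $\Lambda(n)(\log n)^{k-1}=\nu^{k-1}(\log p)^{k}$, so
\[
\sum_{p\le x}\frac{\chi(p)\log^{k}p}{p}=\sum_{n\le x}\frac{\Lambda(n)\chi(n)(\log n)^{k-1}}{n}+E(x),
\]
where $E(x)$ collects the contribution of the $\nu\ge 2$ terms. Since $|\chi|\le1$ and $\sum_{p}(\log p)^{k}p^{-2}$ converges, $E(x)=O_{k}(1)$ uniformly in $x$ and $m$, so it suffices to estimate the $\Lambda$-sum on the right.

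\emph{Step 2 (partial summation).} Apply Lemma~\ref{lem2} with $a(n)=\Lambda(n)\chi(n)$, $A(t)=\psi(t,\chi)$ (which vanishes for $t<2$), and $f(t)=(\log t)^{k-1}/t$, for which $f'(t)=\big((k-1)(\log t)^{k-2}-(\log t)^{k-1}\big)/t^{2}$. This gives
\[
\sum_{n\le x}\frac{\Lambda(n)\chi(n)(\log n)^{k-1}}{n}=\psi(x,\chi)\frac{(\log x)^{k-1}}{x}-\int_{2}^{x}\psi(t,\chi)\,f'(t)\,dt .
\]

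\emph{Step 3 (inserting Siegel--Walfisz).} Since $m\le(\log x)^{K}$, Lemma~\ref{lem:sw} gives $\psi(x,\chi)\ll_{K}x\exp(-c\sqrt{\log x})$, so the boundary term is $O_{K}(1)$. In the integral, $|f'(t)|\ll_{k}\big((\log t)^{k-1}+1\big)/t^{2}$; applying Lemma~\ref{lem:sw} at the point $t$ --- legitimate precisely when $m\le(\log t)^{K}$, i.e.\ for $t\ge \exp(m^{1/K})$ --- and substituting $u=\log t$ bounds the corresponding part of the integral by $\ll_{K}\int_{0}^{\infty}(u^{k-1}+1)e^{-c\sqrt u}\,du=O_{k}(1)$. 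Thus only the short range $2\le t\le \exp(m^{1/K})$ survives; bounding $\psi(t,\chi)$ there trivially by $\sum_{n\le t}\Lambda(n)\ll t$ gives a contribution $\ll_{K}(\log x)^{k}$, which already establishes a bound of the stated shape with $k$ in place of $k-1$.

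\emph{Step 4 (the short range --- the main obstacle).} To recover the sharp exponent $k-1$ one must exploit cancellation in the range $2\le t\le\exp(m^{1/K})$, where Lemma~\ref{lem:sw} is silent. The key observation is that, up to the $O_{K}(1)$ tail over $(x,\infty)$ already controlled in Step~3, the $\Lambda$-sum of Step~1 coincides with the value at $s=1$ of $(-1)^{k-1}\frac{d^{k-1}}{ds^{k-1}}\!\left(-\frac{L'}{L}(s,\chi)\right)$, which is analytic there because $L(1,\chi)\ne 0$. Estimating this quantity uniformly for $m\le(\log x)^{K}$ --- via the classical zero-free region for $L(s,\chi)$ together with Siegel's theorem on the size of $L(1,\chi)$, the very inputs underlying Lemma~\ref{lem:sw} --- shows it is $o(\log x)$, hence $O_{K}(\log^{k-1}x)$, and combining this with Steps~1--3 yields the lemma. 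The genuine obstacle, and the reason the hypothesis $m\le(\log x)^{K}$ cannot be dispensed with, is precisely this uniform control of $-L'/L(s,\chi)$ near $s=1$ (equivalently, the treatment of a possible exceptional zero); the remainder of the argument is routine partial summation and convergent-integral estimates.
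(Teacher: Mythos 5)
Your Steps 1--3 are sound and run parallel to the paper's argument, with two differences: you work with $\psi(t,\chi)$ and handle all $k$ in one partial summation, whereas the paper works with $\theta(t,\chi)$, proves the case $k=1$ first and then bootstraps to $k\ge 2$ by a second application of Lemma~\ref{lem2}; and, unlike the paper's Case 1 (which inserts the bound $\theta(t,\chi)\ll t\exp(-c\sqrt{\log t})$ for all $2\le t\le x$ and concludes from the convergent integral), you split the integral at the Siegel--Walfisz threshold $\exp(m^{1/K})$. For $k\ge 2$ your Steps 1--3 can in fact be closed without Step 4: apply Lemma~\ref{lem:sw} with a larger fixed exponent $K'$ in place of $K$, so that the short range becomes $t\le\exp(m^{1/K'})$ and its trivial contribution is $\ll m^{k/K'}\le(\log x)^{kK/K'}\ll(\log x)^{k-1}$ once $K'\ge kK/(k-1)$. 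So everything hinges on $k=1$, where the lemma demands $O_K(1)$.

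That is exactly where Step 4 fails, and it is a genuine gap. First, the inference ``it is $o(\log x)$, hence $O_K(\log^{k-1}x)$'' is valid only for $k\ge2$; for $k=1$ the target is $O_K(1)$ and $o(\log x)$ gives nothing of the kind. Second, the uniform estimate you would actually need for $k=1$ is $|L'/L(1,\chi)|\ll_K 1$ uniformly for $m\le(\log x)^K$, and the inputs you invoke do not deliver it: the classical zero-free region together with Siegel's theorem give only $1-\beta\gg_\varepsilon m^{-\varepsilon}$ (ineffectively), hence $|L'/L(1,\chi)|\ll_\varepsilon m^\varepsilon\le(\log x)^{K\varepsilon}$, a quantity that grows with $x$ rather than being bounded in terms of $K$. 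So Step 4 is a sketch whose concluding step collapses precisely in the one case the lemma (and its later uses, e.g.\ the $n=1$ term in Lemma~\ref{lem6} and the paper's own Case 2 bootstrap) requires at full strength; as written, the proposal does not establish the stated bound for $k=1$. Your instinct that the short range $t\le\exp(m^{1/K})$, equivalently the behaviour of $-L'/L(s,\chi)$ near $s=1$ and a possible exceptional zero, is the crux is correct --- but identifying the obstacle is not the same as overcoming it, and the paper's own proof, for better or worse, does not take your route: it never performs the split, so there is no Step 4 there for yours to be compared against.
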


\begin{proof}
  We split the proof into two cases.

\textbf{Case 1: $k=1$.}
  We first establish the bound for the base case. Let $\theta(x, \chi) = \sum_{p \le x} \chi(p) \log p$. Since $\psi(x, \chi) = \theta(x, \chi) + O(\sqrt{x} \log^2 x)$, Lemma \ref{lem:sw} implies that
 \[ \theta(x, \chi) \ll x \exp(-c_1 \sqrt{\log x}) \]
 uniformly for $m \le (\log x)^K$.
 Applying Abel's summation formula to the sum $\sum_{p \le x} \frac{\chi(p) \log p}{p}$, we obtain
 \begin{align*}
   \sum_{p \le x} \frac{\chi(p) \log p}{p} 
   &= \frac{\theta(x, \chi)}{x} + \int_{2}^{x} \frac{\theta(t, \chi)}{t^2} \, dt \\
   &\ll \exp(-c_1 \sqrt{\log x}) + \int_{2}^{x} \frac{\exp(-c_1 \sqrt{\log t})}{t} \, dt.
   \end{align*}
  The integral $\int_{2}^{\infty} \frac{\exp(-c_1 \sqrt{\log t})}{t} \, dt$ converges absolutely. Thus, the partial sums are uniformly bounded:
 \begin{equation}\label{eq:base_case}
     S(x) := \sum_{p \le x} \frac{\chi(p) \log p}{p} = O_K(1).
 \end{equation}

 \textbf{Case 2: $k \ge 2$.}
 We rewrite the sum as $\sum_{p \le x} \left( \frac{\chi(p) \log p}{p} \right) (\log p)^{k-1}$. Let $S(t)$ be defined as in \eqref{eq:base_case}. By Abel's identity (Lemma \ref{lem2}), we have
  \begin{align*}
   \sum_{p \le x} \frac{\chi(p) \log^k p}{p}
    &= S(x) (\log x)^{k-1} - \int_{2}^{x} S(t) \frac{d}{dt} \left( (\log t)^{k-1} \right) \, dt \\
   &= S(x) (\log x)^{k-1} - (k-1) \int_{2}^{x} S(t) (\log t)^{k-2} \frac{1}{t} \, dt.
 \end{align*}
 Since $S(t) = O_K(1)$ for all $t \ge 2$ and $\int_{2}^{x} \frac{(\log t)^{k-2}}{t} \, dt \ll (\log x)^{k-1}$, we deduce
 \begin{align*}
  \sum_{p \le x} \frac{\chi(p) \log^k p}{p} 
  &\ll (\log x)^{k-1} + (k-1) \int_{2}^{x} \frac{(\log t)^{k-2}}{t} \, dt \\
&= O_K(\log^{k-1} x) + O_K\left( (\log x)^{k-1} \right) \\
 &= O_K(\log^{k-1} x).
 \end{align*}
 This completes the proof.
\end{proof}

The next two lemmas handle the logarithmic weights appearing in the hyperbola method expansion.

\begin{lemma}\label{lem6}
    Let $\chi$ be a non-principal Dirichlet character $\pmod{m}$ with $m \le (\log x)^K$, where $K>0$ is a fixed constant. Then for any $k \in \mathbb{N}$,
    \begin{align*}
        \sum_{p \le \sqrt{x}} \frac{\chi(p)}{p} \left( \log \left( 1 - \frac{\log p}{\log x} \right) \right)^k = O\left(\frac{1}{\log x}\right).
    \end{align*}
\end{lemma}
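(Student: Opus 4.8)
The plan is to expand the $k$-th power of the logarithm using its Taylor series and then invoke Lemma \ref{lem5} term by term. For $p \le \sqrt{x}$ we have $0 \le \log p/\log x \le 1/2$, so setting $u = \log p/\log x$ we may write
\[
\left( \log(1-u) \right)^k = \left( -\sum_{\ell \ge 1} \frac{u^\ell}{\ell} \right)^k = (-1)^k \sum_{r \ge k} c_{k,r}\, u^r,
\]
where the $c_{k,r}$ are non-negative constants (independent of $x$) with $\sum_{r \ge k} c_{k,r} (1/2)^r < \infty$; in fact $c_{k,r} \le \binom{r-1}{k-1}$ or some similar crude bound suffices, since $|\log(1-u)| \le 2u$ on $[0,1/2]$ gives absolute convergence of the rearranged series. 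Substituting $u = \log p/\log x$ yields
\[
\sum_{p \le \sqrt{x}} \frac{\chi(p)}{p} \left( \log\left(1 - \frac{\log p}{\log x}\right)\right)^k
= (-1)^k \sum_{r \ge k} \frac{c_{k,r}}{\log^r x} \sum_{p \le \sqrt{x}} \frac{\chi(p)\log^r p}{p}.
\]

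Next I would bound each inner sum. By Lemma \ref{lem5} (applied with $\sqrt{x}$ in place of $x$, which is harmless since $m \le (\log x)^K \ll (\log \sqrt{x})^K$), we have $\sum_{p \le \sqrt{x}} \chi(p)\log^r p / p = O_K(\log^{r-1}\sqrt{x}) = O_{K,r}(\log^{r-1} x)$. Hence the $r$-th term contributes $O_{K}\big( c_{k,r} \log^{-1} x \big)$, uniformly in $x$. Summing over $r \ge k$ and using $\sum_{r\ge k} c_{k,r}(1/2)^{r} < \infty$ — here one must be slightly careful that the implied constant in Lemma \ref{lem5} does not grow too fast with $r$; tracking the proof of Lemma \ref{lem5} shows the constant is $O((r-1)!)$ or polynomial after absorbing into the $c_{k,r}$, and in any case one can afford a factor growing slower than $2^r$ — we obtain the claimed bound $O(1/\log x)$, with implied constant depending only on $k$ and $K$.

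The main obstacle is the uniformity in the tail of the series: one needs the constants from Lemma \ref{lem5} to combine with $c_{k,r}$ into a convergent series when multiplied by $(1/2)^r$. The clean way to handle this is to avoid the term-by-term dependence on $r$ altogether: split the range at, say, $p \le x^{1/\log\log x}$ versus the complementary range. On the complementary range $\log p/\log x$ is bounded away from $0$ only mildly, but one can instead note that $|\log(1-u)|^k \le (2u)^k \cdot (\text{something})$ is too lossy; better, write $(\log(1-u))^k$ as a \emph{finite} Taylor polynomial of degree $R = R(x) \to \infty$ plus a remainder of size $O_k(u^{R+1}) = O_k((1/2)^{R+1})$, which after multiplication by $\sum_{p \le \sqrt x} 1/p = O(\log\log x)$ is $o(1/\log x)$ once $R \gg \log\log x$; the finite main part is handled by Lemma \ref{lem5} as above, now with only $R \ll \log\log x$ terms each contributing $O(1/\log x)$, for a total of $O(\log\log x/\log x)$ — which is \emph{not} quite $O(1/\log x)$. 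To recover the sharp bound one therefore does want the full infinite expansion with the crude combinatorial bound $c_{k,r} \le k^r$ (valid since $(\sum_{\ell\ge1} u^\ell/\ell)^k \le (\sum_{\ell\ge1}u^\ell)^k = (u/(1-u))^k \le (2u)^k$ term-extraction gives coefficients bounded by those of $(2u)^k/(1-2u)^{\,\cdot}$, hence $\le C_k 2^r$), so that $\sum_r c_{k,r}(1/2)^r$ converges and, crucially, even $\sum_r c_{k,r}(1/2)^r \cdot (\text{poly}(r))$ converges — absorbing any polynomial growth of the Lemma \ref{lem5} constant. With this bookkeeping the term-by-term argument goes through and delivers $O(1/\log x)$ cleanly.
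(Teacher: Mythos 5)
Your approach is the same as the paper's: expand $\bigl(\log(1-u)\bigr)^k$ into its power series, substitute $u=\log p/\log x$, interchange the sums, and apply Lemma \ref{lem5} to each inner sum $\sum_{p\le\sqrt{x}}\chi(p)\log^r p/p \ll \log^{r-1}\sqrt{x} = (\tfrac12\log x)^{r-1}$, so that the factor $2^{-(r-1)}$ coming from the upper limit $\sqrt{x}$ makes the series over $r$ converge and leaves a single power $1/\log x$. Your first paragraph is essentially the paper's proof, and your two worries both have clean resolutions that you partly obscure in the final paragraph.

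First, the dependence on $r$ of the implied constant in Lemma \ref{lem5} is not a real obstacle: inspecting that proof, the bound for exponent $r\ge 2$ is $|S(x)|(\log x)^{r-1}+(r-1)|S|_\infty\int_2^x(\log t)^{r-2}\,dt/t \le 2C_K(\log x)^{r-1}$, since the factor $(r-1)$ cancels against the $1/(r-1)$ from the integral; so the constant is uniform in $r$, not $O((r-1)!)$, and no extra absorption is needed. Second, your closing ``fix'' is actually the flawed step: with the crude bound $c_{k,r}\le C_k 2^r$ the series $\sum_r c_{k,r}(1/2)^r$ is \emph{not} shown to converge (the bound gives $\sum_r C_k$), so that bookkeeping does not close the argument. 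What you need, and what you already wrote earlier, is the polynomial bound $c_{k,r}\le\binom{r-1}{k-1}=O_k(r^{k-1})$ (the coefficient of $u^r$ in $(\sum_{\ell\ge1}u^\ell/\ell)^k$ is at most the number of compositions of $r$ into $k$ parts), exactly as in the paper, where $b_n=O(n^k)$; then $\sum_r r^{k-1}2^{-(r-1)}<\infty$ and the term-by-term argument yields $O(1/\log x)$. The truncation detour is unnecessary and, as you correctly note yourself, only gives $O(\log\log x/\log x)$.
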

\begin{proof}
    For $|y| < 1$, we have the Taylor expansion $\log^k(1-y) = (-1)^k \sum_{n=k}^{\infty} b_n y^n$, where the coefficients $b_n$ depend on $k$ and satisfy $b_n = O(n^k)$.
    Let $y = \frac{\log p}{\log x}$. For $p \le \sqrt{x}$, we have $y \le 1/2$. Thus, we can exchange the summation orders:
    \begin{align*}
        \sum_{p \le \sqrt{x}} \frac{\chi(p)}{p} \left( \log \left( 1 - \frac{\log p}{\log x} \right) \right)^k
        &= (-1)^k \sum_{p \le \sqrt{x}} \frac{\chi(p)}{p} \sum_{n=k}^{\infty} b_n \left( \frac{\log p}{\log x} \right)^n \\
        &= (-1)^k \sum_{n=k}^{\infty} \frac{b_n}{(\log x)^n} \sum_{p \le \sqrt{x}} \frac{\chi(p) \log^n p}{p}.
    \end{align*}
    By Lemma \ref{lem5}, the inner sum is $O(\log^{n-1} \sqrt{x}) = O( (\frac{1}{2} \log x)^{n-1} )$. Thus, the total sum is bounded by
    \[
        \ll \sum_{n=k}^{\infty} |b_n| \frac{(\log x)^{n-1}}{(\log x)^n} \frac{1}{2^{n-1}} = \frac{1}{\log x} \sum_{n=k}^{\infty} \frac{|b_n|}{2^{n-1}}.
    \]
    Since the series $\sum |b_n| 2^{-(n-1)}$ converges absolutely, the result follows.
\end{proof}

\begin{lemma}\label{lem7}
    Let $\chi_0$ be the principal character $\pmod{m}$ with $m \le (\log x)^K$, where $K>0$ is a fixed constant. Then for any $k \in \mathbb{N}$,
    \begin{align*}
        \sum_{p \le \sqrt{x}} \frac{\chi_0(p)}{p} \left( \log \left( 1 - \frac{\log p}{\log x} \right) \right)^k
        = \int_{0}^{1/2} \frac{\log^k(1-t)}{t} \, dt + O\left(\frac{1}{\log x}\right).
    \end{align*}
\end{lemma}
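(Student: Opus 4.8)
The plan is to reduce at once to the trivial modulus and then use partial summation. Write $g(u):=\log^{k}(1-u)$; since $k\ge1$ this is a smooth function on $[0,1/2]$ with $g(0)=0$, a feature I shall use more than once. As $\chi_{0}(p)=1$ for $p\nmid m$ and $\chi_{0}(p)=0$ for $p\mid m$, the sum in question splits as
\[
\sum_{p\le\sqrt x}\frac{\chi_{0}(p)}{p}\,g\!\left(\frac{\log p}{\log x}\right)
=\sum_{p\le\sqrt x}\frac{1}{p}\,g\!\left(\frac{\log p}{\log x}\right)
-\sum_{\substack{p\le\sqrt x\\ p\mid m}}\frac{1}{p}\,g\!\left(\frac{\log p}{\log x}\right).
\]
The correction sum is negligible: every prime dividing $m$ is at most $m\le(\log x)^{K}$, so the cutoff $p\le\sqrt x$ is vacuous and $\log p/\log x\le K(\log\log x)/\log x<1/2$; combining $|g(u)|\le(2u)^{k}$ on $[0,1/2]$ with the bound $\sum_{p\mid m}(\log^{k}p)/p\ll_{k}(\log\log m)^{k}+1$ (Mertens' first theorem \eqref{eq.mertens1} and partial summation; the extremal $m$ is a primorial) shows this correction sum is $O_{k,K}(1/\log x)$. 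It therefore suffices to prove the $m=1$ case, i.e.
\[
S:=\sum_{p\le\sqrt x}\frac{1}{p}\,g\!\left(\frac{\log p}{\log x}\right)
=\int_{0}^{1/2}\frac{\log^{k}(1-u)}{u}\,du+O\!\left(\frac{1}{\log x}\right).
\]

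For $S$ I would apply Abel's identity (Lemma \ref{lem2}) with coefficients $(\log p)/p$, test function $w(t):=g(\log t/\log x)/\log t$, and the refined Mertens estimate
\[
\Theta(t):=\sum_{p\le t}\frac{\log p}{p}=\log t+C_{0}+\rho(t),\qquad\rho(t)\ll 1/\log t,
\]
a standard sharpening of \eqref{eq.mertens1} that follows from the prime number theorem and needs no Siegel--Walfisz input. This gives $S=\Theta(\sqrt x)\,w(\sqrt x)-\int_{2}^{\sqrt x}\Theta(t)\,w'(t)\,dt$, which I would treat term by term after the substitution $u=\log t/\log x$. The $\log t$-term, after one further integration by parts in which the endpoint contribution at $u=0$ vanishes because $g(0)=0$, produces exactly $\int_{0}^{1/2}g(u)/u\,du$, up to an error $O(1/\log x)$ from the lower limit $u=(\log2)/\log x$. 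The constant term telescopes to $C_{0}\,w(2)=O(1/\log x)$. For the remainder term, a short computation gives $u\,g'(u)-g(u)=O(u^{2})$ on $[0,1/2]$ — crucially this holds even when $k=1$, where $g'(0)\neq0$ — so that $w'(t)\ll\bigl(t(\log x)^{2}\bigr)^{-1}$ and
\[
\int_{2}^{\sqrt x}|\rho(t)|\,|w'(t)|\,dt\ll\frac{1}{(\log x)^{2}}\int_{2}^{\sqrt x}\frac{dt}{t\log t}\ll\frac{\log\log x}{(\log x)^{2}}\ll\frac{1}{\log x}.
\]
Collecting the three contributions gives the asserted formula for $S$, hence the lemma.

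The one genuinely delicate point is quantitative: securing the error $O(1/\log x)$ rather than a spurious $O\bigl((\log\log x)/\log x\bigr)$. This is exactly why I would Abel-sum against $\Theta(t)=\sum_{p\le t}(\log p)/p$ rather than against $\sum_{p\le t}1/p$: in the latter route the error term of Mertens' second theorem \eqref{eq.mertens2} gets integrated against $g'(u)$, which does not vanish at $u=0$ when $k=1$, and this manufactures a factor $\log\bigl((\log x)/\log2\bigr)\asymp\log\log x$. The same loss is avoided by retaining instead the prime-number-theorem-strength form $\sum_{p\le t}1/p=\log\log t+A+O(\exp(-c\sqrt{\log t}))$. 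Either way, everything else — the two cancellations that isolate the main integral, and the trivial estimate for the $p\mid m$ piece — is routine bookkeeping.
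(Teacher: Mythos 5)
Your proof is correct and takes essentially the same route as the paper: the same splitting off of the $p\mid m$ contribution (bounded the same way via $|\log(1-u)|\ll u$ and $m\le(\log x)^K$), followed by a PNT-strength partial-summation evaluation of the unrestricted prime sum, which the paper only sketches by citing Qi--Hu. Your choice to Abel-sum against $\sum_{p\le t}(\log p)/p=\log t+C_0+O(1/\log t)$ rather than against $\pi(t)=\operatorname{Li}(t)+E(t)$ is a minor variant, and your endpoint bookkeeping (in particular the cancellation $u\,g'(u)-g(u)=O(u^2)$ that secures the error $O(1/\log x)$ even for $k=1$) is sound.
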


\begin{proof}
    Let $\Phi(p) = \left( \log \left( 1 - \frac{\log p}{\log x} \right) \right)^k$. Since $\chi_0(p) = 1$ for $p \nmid m$ and $0$ otherwise, we verify the decomposition:
    \begin{align*}
        \sum_{p \le \sqrt{x}} \frac{\chi_0(p)}{p} \Phi(p) 
        = \sum_{p \le \sqrt{x}} \frac{\Phi(p)}{p} - \sum_{\substack{p \le \sqrt{x} \\ p | m}} \frac{\Phi(p)}{p}.
    \end{align*}
    
    We first estimate the second sum, which represents the contribution from the prime factors of $m$. Since $m \le (\log x)^K$, for any $p|m$ we have the bound $\log p \le \log m \ll \log \log x$.
    Using the inequality $|\log(1-y)| \le 2|y|$ valid for small $y$, the weight function satisfies:
    \[
        |\Phi(p)| \ll \left( \frac{\log p}{\log x} \right)^k.
    \]
    Consequently, the sum over primes dividing $m$ is bounded by:
    \begin{align*}
        \left| \sum_{p|m} \frac{\Phi(p)}{p} \right| 
        &\ll \frac{1}{(\log x)^k} \sum_{p|m} \frac{\log^k p}{p} \\
        &\le \frac{(\log m)^{k-1}}{(\log x)^k} \sum_{p|m} \frac{\log p}{p}.
    \end{align*}
    Using the trivial bound $\sum_{p|m} \frac{\log p}{p} \le \sum_{p|m} \log p = \log m$, we obtain:
    \begin{equation}\label{eq:principal_error}
        \sum_{p|m} \frac{\Phi(p)}{p} \ll \left( \frac{\log m}{\log x} \right)^k \ll \left( \frac{K \log \log x}{\log x} \right)^k \ll \frac{1}{\log x}.
    \end{equation}
    Thus, the contribution from the non-coprime terms is negligible.
    
    For the main sum over all primes $p \le \sqrt{x}$, we apply the Prime Number Theorem in the form $\pi(t) = \operatorname{Li}(t) + E(t)$. Using partial summation (similar to the method in \cite[Proposition 3.2]{ref17}), the sum converges to the integral term:
    \[
        \sum_{p \le \sqrt{x}} \frac{\Phi(p)}{p} = \int_{0}^{1/2} \frac{\log^k(1-t)}{t} \, dt + O\left(\frac{1}{\log x}\right).
    \]
    Combining this with \eqref{eq:principal_error} completes the proof.
\end{proof}

Combining Lemma \ref{lem6} and Lemma \ref{lem7}, we obtain the following proposition which describes the asymptotic behavior of sums over arithmetic progressions involving logarithmic weights.

\begin{prop}\label{prop2}

    For every $k \in \mathbb{N}$ and integers $h, m$ with $(h, m) = 1$ and $m \le (\log x)^K$, where $K>0$ is a fixed constant, we have
 \begin{align*}
       \sum_{\substack{p \le \sqrt{x} \\ p \equiv h \pmod{m}}} \frac{\varphi(m)}{p} \left( \log \left( 1 - \frac{\log p}{\log x} \right) \right)^k
        = \int_{0}^{1/2} \frac{\log^k(1-t)}{t} \, dt + O\left(\frac{1}{\log x}\right).
    \end{align*}
\end{prop}

\begin{proof}
    Let $\Phi(p) = \left( \log \left( 1 - \frac{\log p}{\log x} \right) \right)^k$.
    By the orthogonality of Dirichlet characters, we write
    \begin{align*}
        \sum_{\substack{p \le \sqrt{x} \\ p \equiv h \pmod{m}}} \frac{\varphi(m)}{p} \Phi(p)
        &= \sum_{p \le \sqrt{x}} \frac{\Phi(p)}{p} \sum_{\chi \pmod{m}} \overline{\chi}(h) \chi(p) \\
        &= \sum_{\chi \pmod{m}} \overline{\chi}(h) \sum_{p \le \sqrt{x}} \frac{\chi(p)}{p} \Phi(p) \\
        &= \sum_{p \le \sqrt{x}} \frac{\chi_0(p)}{p} \Phi(p) + \sum_{\chi \neq \chi_0} \overline{\chi}(h) \sum_{p \le \sqrt{x}} \frac{\chi(p)}{p} \Phi(p).
    \end{align*}
    The result follows immediately by applying Lemma \ref{lem7} to the principal character term and Lemma \ref{lem6} to the sum over non-principal characters.
\end{proof}
\subsection{Combinatorial Identities}

In order to simplify the coefficients arising from the recursive process, we introduce the following notation regarding symmetric polynomials.

\begin{definition}\label{rek1}
    Let $\{i_1, i_2, \dots, i_j\}$ be a subset of $\{1, 2, \dots, n\}$. Let $y_r$ be defined as in Theorem \ref{thm1.4}, and let $C_r = \varphi(m_r)A_{h_r, m_r}$ denote the constant part of $y_r$.
    
    We define $\tau_k^{(j)}(x; i_1, \dots, i_j)$ as the coefficient of $X^{j-k}$ in the polynomial expansion:
    \begin{equation}
        \prod_{r=1}^{j}(X + y_{i_r}) = \sum_{k=0}^{j} \tau_k^{(j)}(x; i_1, \dots, i_j) X^{j-k}.
    \end{equation}
    In particular, the coefficients appearing in Theorem \ref{thm1.4} correspond to the full set of indices:
    \[ \tau_k^{(n)}(x) = \tau_k^{(n)}(x; 1, \dots, n). \]
    
   Similarly, we define the constant coefficients $\mu_k^{(n)}$ by the expansion:
    \begin{equation}
        \prod_{i=1}^{n}(X + C_i) = \sum_{k=0}^{n} \mu_k^{(n)} X^{n-k}.
    \end{equation}
    In other words, $\mu_k^{(n)}$ is the $k$-th elementary symmetric polynomial in the variables $\{C_1, \dots, C_n\}$.
\end{definition}

Using these definitions, we state the following combinatorial identities which are essential for the cancellation of cross-terms in the proof of the main theorem.

\begin{prop}\label{prop3}
    Let $\tau_k^{(j)}(x; i_1, \dots, i_j)$ and $\mu_k^{(j)}(i_1, \dots, i_j)$ be defined as in Definition \ref{rek1}. For $1 \le j \le n-1$, we have 
    $$\displaystyle \sum_{1 \le i_1 < \dots < i_j \le n-1} \sum_{k=1}^{j} \tau_{j-k}^{(j)}(x; i_1, \dots, i_j) = \sum_{k=1}^{j} \binom{n-1-j+k}{k} \tau_{j-k}^{(n-1)}(x).$$
\end{prop}

\begin{proof}
    Recall from Definition \ref{rek1} that $\tau_{j-k}^{(j)}(x; i_1, \dots, i_j)$ is the coefficient of $X^k$ in the expansion of $\prod_{r=1}^j (X+y_{i_r})$. By Vieta's formulas, this is exactly the elementary symmetric polynomial of degree $j-k$ in the variables $\{y_{i_1}, \dots, y_{i_j}\}$.
    
    It suffices to prove the identity term-wise for each fixed $k$. Let $d = j-k$. We consider the contribution of a specific monomial $Y = y_{t_1} y_{t_2} \dots y_{t_d}$, where $\{t_1, \dots, t_d\}$ is a subset of indices from $\{1, \dots, n-1\}$.
    
    The monomial $Y$ appears in the summand $\tau_{d}^{(j)}(x; i_1, \dots, i_j)$ if and only if the subset of indices $\{i_1, \dots, i_j\}$ contains the set $\{t_1, \dots, t_d\}$. Thus, we need to count the number of subsets of size $j$ from $\{1, \dots, n-1\}$ that contain these fixed $d$ elements.
    
    We must choose the remaining $j-d$ elements from the remaining $(n-1)-d$ available indices. The number of such choices is given by the binomial coefficient:
    \[
    \binom{(n-1)-d}{j-d} = \binom{(n-1)-(j-k)}{j-(j-k)} = \binom{n-1-j+k}{k}.
    \]
    By symmetry, every monomial of degree $j-k$ is counted exactly this many times in the sum over all subsets. Therefore, the total sum is equal to the elementary symmetric polynomial of degree $j-k$ over the full set $\{1, \dots, n-1\}$ (which is $\tau_{j-k}^{(n-1)}(x)$) multiplied by this combinatorial factor. Summing over all $k$ completes the proof.
\end{proof}
\begin{prop}\label{prop_relation}
    Let $y = \log(\log x)$. The coefficients $\tau_k^{(n)}(x)$ and $\mu_k^{(n)}$ satisfy the following polynomial relation for any $0 \le k \le n$:
    \begin{equation}\label{eq:tau_mu_identity}
        \tau_k^{(n)}(x) = \sum_{j=0}^{k} \binom{n-j}{k-j} \mu_j^{(n)} y^{k-j}.
    \end{equation}
    Consequently, $\tau_k^{(n)}(x)$ is a polynomial in $y$ of degree $k$ with leading coefficient $\binom{n}{k}$.
\end{prop}

\begin{proof}
    Recall the defining generating functions from Definition \ref{rek1}:
    \[
    \sum_{k=0}^{n} \tau_k^{(n)}(x) X^{n-k} = \prod_{i=1}^{n} (X + y_i) = \prod_{i=1}^{n} (X + y + C_i),
    \]
    and
    \[
    \sum_{j=0}^{n} \mu_j^{(n)} X^{n-j} = \prod_{i=1}^{n} (X + C_i).
    \]
    Let $Z = X + y$. The first equation can be rewritten as:
    \[
    \sum_{k=0}^{n} \tau_k^{(n)}(x) X^{n-k} = \prod_{i=1}^{n} (Z + C_i) = \sum_{j=0}^{n} \mu_j^{(n)} Z^{n-j} = \sum_{j=0}^{n} \mu_j^{(n)} (X + y)^{n-j}.
    \]
    Using the binomial expansion $(X+y)^{n-j} = \sum_{t=0}^{n-j} \binom{n-j}{t} X^{n-j-t} y^t$, we examine the coefficient of $X^{n-k}$ on the right-hand side.
    For a fixed $j$, the term $X^{n-k}$ appears when $n-j-t = n-k$, which implies $t = k-j$. Since $t \ge 0$, we must have $j \le k$.
    Summing over all valid $j$, the coefficient of $X^{n-k}$ is:
    \[
    \sum_{j=0}^{k} \mu_j^{(n)} \binom{n-j}{k-j} y^{k-j}.
    \]
    Comparing this with the left-hand side proves the identity.
\end{proof}

\subsection{Polylogarithmic Functions}
Finally, we recall properties of polylogarithmic functions which appear in the evaluation of the integrals. The polylogarithms $\operatorname{Li}_n(z)$ are defined recursively by
\begin{align*}
    \operatorname{Li}_1(z) &= -\log(1-z), \\
    \operatorname{Li}_n(z) &= \int_{0}^{z} \frac{\operatorname{Li}_{n-1}(t)}{t} \, dt = \sum_{k=1}^{\infty} \frac{z^k}{k^n}, \quad (n \ge 2).
\end{align*}
We require the following integral evaluation involving these functions.
\begin{prop}[See {\cite[Proposition 2.4]{ref17}}]\label{prop4}
    For any $m \in \mathbb{N}$,
    \begin{align*}
        \int_{0}^{1/2} \frac{\log^m(1-t)}{t} \, dt
        &= (-\log 2)^{m+1} + (-1)^m m! \zeta(m+1) \\
        &\quad + (-1)^{m-1} \sum_{s=1}^{m} \binom{m}{s} (\log 2)^{m-s} \operatorname{Li}_{s+1}\left(\frac{1}{2}\right).
    \end{align*}
\end{prop}

\section{Proof of Main Results}

We prove Theorem \ref{thm1.4} by induction on the number of prime variables $n$. The base case $n=1$ is established by Theorem \ref{thm1.1}.

Assume that Theorem \ref{thm1.4} holds for any number of variables $s \le n-1$. That is, for any integers $h_i, m_i$ with $(h_i, m_i)=1$ and $m_i \le (\log x)^K$, we have
\begin{equation}\label{eq3.1}
    \sum_{\substack{p_1\cdots p_s\le x \\ p_i\equiv h_i \pmod{m_i}}}
    \frac{\prod_{i=1}^{s}\varphi(m_i)}{p_1\cdots p_s}
    = \tau_s^{(s)}(x) + \sum_{j=0}^{s-2} a_{s-j} \tau_{j}^{(s)}(x) + O\left(\frac{\log^{s-1}(\log x)}{\log x}\right).
\end{equation}
We now show that the formula holds for $n$. Applying the Dirichlet hyperbola method (Lemma \ref{lem1}) with splitting parameter $y = \sqrt{x}$, we decompose the sum for $n$ variables as follows:
\begin{align*}
    \Sigma_n(x) &:= \sum_{\substack{p_1 \cdots p_n \le x \\ p_i \equiv h_i \pmod{m_i}}} \frac{\prod_{i=1}^{n}\varphi(m_i)}{p_1 \cdots p_n} \\
    &= \sum_{\substack{p_n \le \sqrt{x} \\ p_n \equiv h_n \pmod{m_n}}} \frac{\varphi(m_n)}{p_n}
    \sum_{\substack{p_1 \cdots p_{n-1} \le x/p_n \\ p_i \equiv h_i \pmod{m_i}}} \frac{\prod_{i=1}^{n-1}\varphi(m_i)}{p_1 \cdots p_{n-1}} \\
    &\quad + \sum_{\substack{p_1 \cdots p_{n-1} \le \sqrt{x} \\ p_i \equiv h_i \pmod{m_i}}} \frac{\prod_{i=1}^{n-1}\varphi(m_i)}{p_1 \cdots p_{n-1}}
    \sum_{\substack{p_n \le x/(p_1 \cdots p_{n-1}) \\ p_n \equiv h_n \pmod{m_n}}} \frac{\varphi(m_n)}{p_n} \\
    &\quad - \left( \sum_{\substack{p_1 \cdots p_{n-1} \le \sqrt{x} \\ p_i \equiv h_i \pmod{m_i}}} \frac{\prod_{i=1}^{n-1}\varphi(m_i)}{p_1 \cdots p_{n-1}} \right)
    \left( \sum_{\substack{p_n \le \sqrt{x} \\ p_n \equiv h_n \pmod{m_n}}} \frac{\varphi(m_n)}{p_n} \right).
\end{align*}
We denote these three terms by $\mathcal{A}, \mathcal{B},$ and $\mathcal{C}$ respectively.

\subsection{Estimation of $\mathcal{A}$}

Let $S_{n-1}(t)$ denote the sum over $n-1$ prime variables as defined in the induction hypothesis \eqref{eq3.1}:
\[
    S_{n-1}(t) = \sum_{\substack{p_1 \cdots p_{n-1} \le t \\ p_i \equiv h_i \pmod{m_i}}} \frac{\prod_{i=1}^{n-1}\varphi(m_i)}{p_1 \cdots p_{n-1}}.
\]
Then the term $\mathcal{A}$ is expressed as:
\[
    \mathcal{A} = \sum_{\substack{p_n \le \sqrt{x} \\ p_n \equiv h_n \pmod{m_n}}} \frac{\varphi(m_n)}{p_n} S_{n-1}\left(\frac{x}{p_n}\right).
\]

We apply the induction hypothesis \eqref{eq3.1} with the parameter $s=n-1$ and the argument $x/p_n$. The inner sum $S_{n-1}(x/p_n)$ is explicitly given by:
\begin{equation}\label{eq:inner_A}
    \tau_{n-1}^{(n-1)}\left(\frac{x}{p_n}\right) + \sum_{j=0}^{n-3} a_{n-1-j} \tau_{j}^{(n-1)}\left(\frac{x}{p_n}\right) + O\left(\frac{\log^{n-2}(\log x)}{\log x}\right).
\end{equation}

To evaluate the terms $\tau_{k}^{(n-1)}(x/p_n)$, recall that for the shifted argument, the variables satisfy $y_i(x/p_n) = y_i(x) + z$, where $z = \log(1 - \frac{\log p_n}{\log x})$.

By Vieta's formulas, $\tau_k^{(n-1)}(x/p_n)$ corresponds to the elementary symmetric polynomial of degree $k$ in the shifted variables $\{y_1+z, \dots, y_{n-1}+z\}$. This can be written as a sum over all subsets of indices $J \subseteq \{1, \dots, n-1\}$ of size $k$:
\[
\tau_k^{(n-1)}\left(\frac{x}{p_n}\right) = \sum_{\substack{J \subseteq \{1, \dots, n-1\} \\ |J| = k}} \prod_{j \in J} (y_j(x) + z).
\]
Expanding the product $\prod_{j \in J} (y_j + z)$ in powers of $z$, we have
\[
\prod_{j \in J} (y_j + z) = \sum_{i=0}^k z^i e_{k-i}(\{y_j\}_{j \in J}),
\]
where $e_{k-i}(\{y_j\}_{j \in J})$ denotes the elementary symmetric polynomial of degree $k-i$ restricted to the variables in the subset $J$. In the notation of Definition \ref{rek1}, this is precisely $\tau_{k-i}^{(k)}(x; J)$.
Substituting this back into the sum over $J$ and exchanging the order of summation, we obtain:
\[
\tau_k^{(n-1)}\left(\frac{x}{p_n}\right) = \sum_{i=0}^k z^i \left( \sum_{\substack{J \subseteq \{1, \dots, n-1\} \\ |J| = k}} \tau_{k-i}^{(k)}(x; J) \right).
\]
The inner sum is now exactly in the form required to apply Proposition \ref{prop3}. We apply Proposition \ref{prop3} with the subset size parameter $j=k$ and the coefficient degree parameter corresponding to $k-i$. The proposition states that this sum equals the global coefficient $\tau_{k-i}^{(n-1)}(x)$ multiplied by a combinatorial factor:
\[
\sum_{\substack{J \subseteq \{1, \dots, n-1\} \\ |J| = k}} \tau_{k-i}^{(k)}(x; J) = \binom{n-1-k+i}{i} \tau_{k-i}^{(n-1)}(x).
\]
Thus, we derive the exact translation formula:
\begin{equation}\label{eq:tau_shift}
    \tau_k^{(n-1)}\left(\frac{x}{p_n}\right) = \sum_{i=0}^{k} \binom{n-1-k+i}{i} \tau_{k-i}^{(n-1)}(x) z^i.
\end{equation}

Having established the expansion of the inner terms, we now substitute \eqref{eq:tau_shift} back into the expression for $\mathcal{A}$ and evaluate the outer sum over $p_n$. The term $\mathcal{A}$ becomes a linear combination of sums of the form:
\[
    M_i := \sum_{\substack{p_n \le \sqrt{x} \\ p_n \equiv h_n \pmod{m_n}}} \frac{\varphi(m_n)}{p_n} z^i = \sum_{p_n \le \sqrt{x}} \frac{\varphi(m_n)}{p_n} \left( \log \left( 1 - \frac{\log p_n}{\log x} \right) \right)^i.
\]
We distinguish two cases for the exponent $i$:
\begin{enumerate}
    \item \textbf{Case $i=0$:} The logarithmic weight is identically 1. Using Theorem \ref{thm1.1} with the upper bound $\sqrt{x}$, we have
    \[
    M_0 = y_n(\sqrt{x}) + O\left(\frac{1}{\log x}\right) = y_n(x) - \log 2 + O\left(\frac{1}{\log x}\right).
    \]
    \item \textbf{Case $i \ge 1$:} We apply Proposition \ref{prop2}. The sum converges to the integral moment:
    \[
    M_i = \int_{0}^{1/2} \frac{\log^i(1-t)}{t} \, dt + O\left(\frac{1}{\log x}\right) = \mathcal{I}_i + O\left(\frac{1}{\log x}\right).
    \]
\end{enumerate}

Now we collect the terms. The contribution from the leading term\\$\tau_{n-1}^{(n-1)}(x/p_n)$ is:
\[
    \sum_{p_n} \frac{\varphi(m_n)}{p_n} \sum_{i=0}^{n-1} \tau_{n-1-i}^{(n-1)}(x) z^i = \tau_{n-1}^{(n-1)}(x) M_0 + \sum_{i=1}^{n-1} \tau_{n-1-i}^{(n-1)}(x) M_i.
\]
Similarly, the contribution from the lower order terms\\$\sum_{j=0}^{n-3} a_{n-1-j} \tau_{j}^{(n-1)}(x/p_n)$ is:
\[
    \sum_{j=0}^{n-3} a_{n-1-j} \sum_{i=0}^j \binom{n-1-j+i}{i} \tau_{j-i}^{(n-1)}(x) M_i.
\]

Combining these parts and grouping by $M_0$ (the term $y_n - \log 2$) and $M_i$ (the integrals $\mathcal{I}_i$), we arrive at the final expansion for $\mathcal{A}$:
\begin{align}\label{eqA}
    \mathcal{A} &= \left( \tau_{n-1}^{(n-1)}(x) + \sum_{j=0}^{n-3} a_{n-1-j} \tau_{j}^{(n-1)}(x) \right) (y_n - \log 2) + \sum_{i=1}^{n-1} \tau_{n-1-i}^{(n-1)}(x) \mathcal{I}_i  \notag \\
    &\quad + \sum_{j=1}^{n-3} a_{n-1-j} \sum_{i=1}^{j} \binom{n-1-j+i}{i} \tau_{j-i}^{(n-1)}(x) \mathcal{I}_i + O\left(\frac{\log^{n-1}(\log x)}{\log x}\right).
\end{align}
\subsection{Proof of Theorem \ref{thm1.3}}

In this subsection, we establish the asymptotic formula for the log-weighted sums.

\begin{proof}[Proof of Theorem \ref{thm1.3}]
    Let $S_{n-1}(t) = \sum_{p_1 \dots p_{n-1} \le t} \frac{\prod \varphi(m_i)}{p_1 \dots p_{n-1}}$ denote the unweighted sum. We assume the validity of Theorem \ref{thm1.4} for $n-1$ variables. By the induction hypothesis, we have
    \[
    S_{n-1}(t) = \sum_{k=0}^{n-1} a_{n-1-k} \tau_{k}^{(n-1)}(t) + R(t),
    \]
    where the remainder satisfies $R(t) = O\left( \frac{(\log \log t)^{n-2}}{\log t} \right)$.

    Let $Y(t) = \log \log t$. To express the main term explicitly as a polynomial in $Y(t)$, we invoke Proposition \ref{prop_relation}. Applying identity \eqref{eq:tau_mu_identity} with the dimension parameter $n-1$, we have
    \[
        \tau_k^{(n-1)}(t) = \sum_{j=0}^k \binom{n-1-j}{k-j} \mu_j^{(n-1)} (Y(t))^{k-j}.
    \]
    Substituting this relation into the expression for $S(t)$ and interchanging the order of summation, we obtain:
    \begin{align*}
        \sum_{k=0}^{n-1} a_{n-1-k} \tau_{k}^{(n-1)}(t) 
        &= \sum_{k=0}^{n-1} a_{n-1-k} \sum_{j=0}^k \binom{n-1-j}{k-j} \mu_j^{(n-1)} (Y(t))^{k-j} \\
        &= \sum_{j=0}^{n-1} \mu_j^{(n-1)} \sum_{k=j}^{n-1} a_{n-1-k} \binom{n-1-j}{k-j} (Y(t))^{k-j}.
    \end{align*}
    Let $r = k-j$ denote the exponent of $Y(t)$. The inner sum becomes
    \[
    \sum_{r=0}^{n-1-j} a_{n-1-j-r} \binom{n-1-j}{r} (Y(t))^r.
    \]
    Letting $t = n-1-j-r$ (so that the index of $a$ is $t$), and noting that $\binom{N}{N-t} = \binom{N}{t}$, this inner sum corresponds exactly to the inner summation in the definition of $\mathcal{Q}_{n-1}(Y)$ given in \eqref{eq:poly_def}.
    Thus, we have rigorously established that
    \begin{equation}\label{eq:S_t_explicit}
        S(t) = \mathcal{Q}_{n-1}(\log \log t) + R(t).
    \end{equation}

    Next, we evaluate the weighted sum $\mathcal{W}(x) = \sum_{p_1 \dots p_{n-1} \le x} \frac{\prod \varphi(m_i) \log^k(p_1 \dots p_{n-1})}{p_1 \dots p_{n-1}}$ using Abel's summation formula:
    \[
    \mathcal{W}(x) = S_{n-1}(x) \log^k x - k \int_{2}^{x} S_{n-1}(t) (\log t)^{k-1} \frac{dt}{t}.
    \]
    Substituting \eqref{eq:S_t_explicit} into the integral, we split the calculation into a main term contribution $\mathcal{M}$ and an error term contribution $\mathcal{E}$.
    
    \textbf{1. Estimation of the Main Term $\mathcal{M}$:}
    Let $u = \log \log t$. The measure transforms as $dt/t = e^u du$. The integral becomes
    \[
    \mathcal{I} = k \int_{\log\log 2}^{\log \log x} \mathcal{Q}_{n-1}(u) e^{ku} \, du.
    \]
    We evaluate this integral using repeated integration by parts. For any polynomial $P(u)$, we have the identity $$\int P(u) e^{ku} du = e^{ku} \sum_{h \ge 0} (-1)^h k^{-(h+1)} P^{(h)}(u).$$
    Applying this to $\mathcal{Q}_{n-1}$ and evaluating at the limits $Y = \log \log x$ and $u_0 = \log \log 2$:
    \[
    \mathcal{I} = \left[ e^{ku} \sum_{h=0}^{n-1} \frac{(-1)^h}{k^h} \mathcal{Q}_{n-1}^{(h)}(u) \right]_{u_0}^{Y}.
    \]
    Combining this with the boundary term $$S_{n-1}(x)\log^k x = (\mathcal{Q}_{n-1}(Y) + R(x)) \log^k x,$$ the term corresponding to $h=0$ cancels exactly with the polynomial part of $S_{n-1}(x)$. The contribution from the lower limit $u_0$ is constant with respect to $x$, say $C(n,k) = O(1)$. Thus, the main part of $\mathcal{W}(x)$ is
    \begin{align*}
        \mathcal{M} &= \mathcal{Q}_{n-1}(Y)\log^k x - \left( \log^k x \sum_{h=0}^{n-1} \frac{(-1)^h}{k^h} \mathcal{Q}_{n-1}^{(h)}(Y) - C(n,k) \right) \\
        &= \log^k x \sum_{h=1}^{n-1} \frac{(-1)^{h-1}}{k^h} \mathcal{Q}_{n-1}^{(h)}(Y) + O(1).
    \end{align*}

    \textbf{2. Estimation of the Error Term $\mathcal{E}$:}
    The error contribution arises from the remainder $R(t)$ in both the boundary term and the integral.
    For the boundary term, we have $R(x)\log^k x \ll (\log x)^{k-1} (\log \log x)^{n-2}$.
    For the integral term, we must bound
    \[
    \mathcal{E}_{int} = k \int_{2}^{x} R(t) (\log t)^{k-1} \frac{dt}{t}.
    \]
    Using the bound $|R(t)| \le C \frac{(\log \log t)^{n-2}}{\log t}$ for $t \ge 3$, and changing variables to $v = \log t$, we have
    \[
    |\mathcal{E}_{int}| \ll \int_{\log 2}^{\log x} \frac{(\log v)^{n-2}}{v} v^{k-1} \, dv = \int_{\log 2}^{\log x} v^{k-2} (\log v)^{n-2} \, dv.
    \]
    We distinguish two cases based on the exponent of $v$:
    \begin{itemize}
        \item \textbf{Case $k=1$:} The integral becomes $\int_{\log 2}^{\log x} v^{-1} (\log v)^{n-2} \, dv$.
        Setting $w = \log v$, this is $$\int_{\log\log2}^{\log\log x} w^{n-2} dw \asymp (\log \log x)^{n-1}.$$
        Thus, the error is $O((\log \log x)^{n-1})$.
        
        \item \textbf{Case $k \ge 2$:} We employ integration by parts with $U(v) = (\log v)^{n-2}$ and $dV = v^{k-2} \, dv$. Then $dU = (n-2)(\log v)^{n-3} v^{-1} \, dv$ and $V = \frac{v^{k-1}}{k-1}$. We obtain
        \begin{align*}
            \int_{\log 2}^{\log x} v^{k-2} (\log v)^{n-2} \, dv &= \left[ \frac{v^{k-1}}{k-1} (\log v)^{n-2} \right]_{\log 2}^{\log x} \\
            &\quad - \frac{n-2}{k-1} \int_{\log 2}^{\log x} v^{k-2} (\log v)^{n-3} \, dv.
        \end{align*}
        The boundary term at $\log x$ contributes $O\left( (\log x)^{k-1} (\log \log x)^{n-2} \right)$. For the remaining integral, we note that $(\log v)^{n-3}$ is monotonic, so
        \begin{align*}
             \int_{\log 2}^{\log x} v^{k-2} (\log v)^{n-3} \, dv &\le (\log \log x)^{n-3} \int_{\log 2}^{\log x} v^{k-2} \, dv \\
             &\ll (\log x)^{k-1} (\log \log x)^{n-3}.
        \end{align*}
        Since $n$ is fixed, the second term is of strictly lower order than the boundary term. Thus, the total integral is dominated by the boundary term:
        \[
        \ll (\log x)^{k-1} (\log \log x)^{n-2}.
        \]
    \end{itemize}
    In both cases, the integral error term dominates the boundary error term $R(x)\log^k x$.
    Therefore, the total error is bounded by
    \[
    O\left( (\log x)^{k-1} (\log \log x)^{\max(n-1, n-2)} \right).
    \]
    For $k=1$, the power of $\log \log x$ is $n-1$. For $k \ge 2$, it is $n-2$ (which is subsumed by the stated error term). To provide a uniform statement, the error term $O(\log^{k-1} x (\log \log x)^{n-1})$ covers all cases. This completes the proof.
\end{proof}
   \subsection{Estimation of $\mathcal{B}$}

We now estimate the second term $\mathcal{B}$, defined as
\[
\mathcal{B} = \sum_{\substack{p_1 \cdots p_{n-1} \le \sqrt{x} \\ p_i \equiv h_i \pmod{m_i}}} \frac{\prod_{i=1}^{n-1}\varphi(m_i)}{p_1 \cdots p_{n-1}} \left( \sum_{\substack{p_n \le x/(p_1 \cdots p_{n-1}) \\ p_n \equiv h_n \pmod{m_n}}} \frac{\varphi(m_n)}{p_n} \right).
\]
Let $P = p_1 \cdots p_{n-1}$. We apply Theorem \ref{thm1.1} to the inner sum over $p_n$. Since the upper bound is $x/P$ and $P \le \sqrt{x}$, we have $\log(x/P) \asymp \log x$. Thus, the inner sum satisfies
\begin{align*}
    \sum_{\substack{p_n \le x/P \\ p_n \equiv h_n \pmod{m_n}}} \frac{\varphi(m_n)}{p_n} 
    &= y_n + \log\left(1 - \frac{\log P}{\log x}\right) + O\left(\frac{1}{\log x}\right),
\end{align*}
where $y_n = \log \log x + \varphi(m_n)A_{h_n, m_n}$.

Substituting this asymptotic formula back into the expression for $\mathcal{B}$, we decompose the total sum into two distinct parts: a linear term $\mathcal{B}_{lin}$ involving $y_n$, and a series term $\mathcal{B}_{ser}$ arising from the logarithmic expansion.
\[
    \mathcal{B} = \mathcal{B}_{lin} + \mathcal{B}_{ser} + O\left(\frac{(\log \log x)^{n-1}}{\log x}\right).
\]
We estimate these two terms separately.

\textbf{Step 1: Estimation of the linear term $\mathcal{B}_{lin}$.}

The term involving $y_n$ factors out of the summation over $P$. We can write
\[
    \mathcal{B}_{lin} = y_n \sum_{\substack{P \le \sqrt{x} \\ p_i \equiv h_i \pmod{m_i}}} \frac{\prod_{i=1}^{n-1}\varphi(m_i)}{P} = y_n S(\sqrt{x}),
\]
where $S(\sqrt{x})$ is the sum over $n-1$ variables. We invoke the induction hypothesis \eqref{eq3.1} (with $x$ replaced by $\sqrt{x}$) to estimate $S(\sqrt{x})$. This yields
\begin{equation}\label{eq:B_lin_est}
    \mathcal{B}_{lin} = y_n \left( \tau_{n-1}^{(n-1)}(\sqrt{x}) + \sum_{j=0}^{n-3} a_{n-1-j} \tau_{j}^{(n-1)}(\sqrt{x}) \right) + O\left(\frac{ (\log \log x)^{n-1}}{\log x}\right).
\end{equation}
The error term here is subsumed by the total error term.

\textbf{Step 2: Estimation of the series term $\mathcal{B}_{ser}$.}

We expand the logarithmic term into its power series representation:
\[
\log\left(1 - \frac{\log P}{\log x}\right) = -\sum_{k=1}^\infty \frac{1}{k} \left(\frac{\log P}{\log x}\right)^k.
\]
Substituting this series into the definition of $\mathcal{B}_{ser}$, we obtain
\[
    \mathcal{B}_{ser} = -\sum_{k=1}^\infty \frac{1}{k (\log x)^k} \sum_{\substack{P \le \sqrt{x} \\ p_i \equiv h_i \pmod{m_i}}} \frac{\prod_{i=1}^{n-1} \varphi(m_i) \log^k P}{P}.
\]
To evaluate the inner sum over $P$, we apply Theorem \ref{thm1.3} with the upper bound $\sqrt{x}$. Note that the logarithmic variable in the asymptotic expansion becomes $\log \sqrt{x} = \frac{1}{2}\log x$. Consequently, the inner sum is
\[
    \left(\frac{1}{2}\log x\right)^k \sum_{h=1}^{n-1} \frac{(-1)^{h-1}}{k^h} \mathcal{Q}_{n-1}^{(h)}(y) + O\left( \log^{k-1} x (\log \log x)^{n-1} \right).
\]
Inserting this result back into the expression for $\mathcal{B}_{ser}$, the powers of $\log x$ cancel, yielding the factor $(1/2)^k$. We exchange the order of summation between $k$ and $h$:
\begin{align*}
    \mathcal{B}_{ser} &= - \sum_{k=1}^\infty \frac{1}{k} \left(\frac{1}{2}\right)^k \left( \sum_{h=1}^{n-1} \frac{(-1)^{h-1}}{k^h} \mathcal{Q}_{n-1}^{(h)}(y)+ O\left( \log^{-1} x (\log \log x)^{n-1} \right) \right) \\
    &= \left(\sum_{h=1}^{n-1} (-1)^h \mathcal{Q}_{n-1}^{(h)}(y)+ O\left( \log^{k-1} x (\log \log x)^{n-1} \right)+\right) \left( \sum_{k=1}^\infty \frac{(1/2)^k}{k^{h+1}} \right).
\end{align*}
We recognize the inner summation over $k$ as the polylogarithm $\operatorname{Li}_{h+1}(1/2)$. Thus,
\begin{equation}\label{eq:B_ser_est}
    \mathcal{B}_{ser} = \sum_{h=1}^{n-1} (-1)^h \operatorname{Li}_{h+1}\left(\frac{1}{2}\right) \mathcal{Q}_{n-1}^{(h)}(y)+O\left( \log^{-1} x (\log \log x)^{n-1}\right).
\end{equation}

Finally, summing the contributions from \eqref{eq:B_lin_est} and \eqref{eq:B_ser_est}, we arrive at the asymptotic formula for $\mathcal{B}$:
\begin{equation}\label{eqB}
    \begin{aligned}
        \mathcal{B} &= \left( \tau_{n-1}^{(n-1)}(\sqrt{x}) + \sum_{j=0}^{n-3} a_{n-1-j} \tau_{j}^{(n-1)}(\sqrt{x}) \right) y_n \\
        &\quad + \sum_{h=1}^{n-1} (-1)^h \operatorname{Li}_{h+1}\left(\frac{1}{2}\right) \mathcal{Q}_{n-1}^{(h)}(y)
        + O\left(\frac{\log^{n-1}(\log x)}{\log x}\right).
    \end{aligned}
\end{equation}
This completes the estimation of $\mathcal{B}$.

\subsection{Estimation of $\mathcal{C}$}

The term $\mathcal{C}$ is the product of the two marginal sums evaluated at $\sqrt{x}$:
\[
\mathcal{C} = \left( \sum_{\substack{p_1 \dots p_{n-1} \le \sqrt{x} \\ p_i \equiv h_i \pmod{m_i}}} \frac{\prod_{i=1}^{n-1}\varphi(m_i)}{p_1 \dots p_{n-1}} \right)
\left( \sum_{\substack{p_n \le \sqrt{x} \\ p_n \equiv h_n \pmod{m_n}}} \frac{\varphi(m_n)}{p_n} \right).
\]
Let $S_{n-1}(\sqrt{x})$ and $S_1(\sqrt{x})$ denote the first and second factors, respectively.

First, we estimate $S_1(\sqrt{x})$. Applying Theorem \ref{thm1.1} with the upper bound $\sqrt{x}$, we have
\[
S_1(\sqrt{x}) = y_n(\sqrt{x}) + O\left(\frac{1}{\log x}\right).
\]
Recalling the definition $y_n(x) = \log \log x + \varphi(m_n)A_{h_n, m_n}$, we observe that
\[
y_n(\sqrt{x}) = \log(\log \sqrt{x}) + \varphi(m_n)A_{h_n, m_n} = y_n(x) - \log 2.
\]
Thus,
\begin{equation}\label{eq:S1_est}
    S_1(\sqrt{x}) = y_n - \log 2 + O\left(\frac{1}{\log x}\right).
\end{equation}

Next, we estimate $S_{n-1}(\sqrt{x})$. Using the induction hypothesis \eqref{eq3.1} for $n-1$ variables at $\sqrt{x}$, we write
\begin{equation}\label{eq:Sn-1_est}
    S_{n-1}(\sqrt{x}) = \mathcal{M}_{n-1}(\sqrt{x}) + \mathcal{E}_{n-1}(\sqrt{x}),
\end{equation}
where the main term is $\mathcal{M}_{n-1}(\sqrt{x}) = \tau_{n-1}^{(n-1)}(\sqrt{x}) + \sum_{j=0}^{n-3} a_{n-1-j} \tau_{j}^{(n-1)}(\sqrt{x})$, and the error term satisfies $\mathcal{E}_{n-1}(\sqrt{x}) = O\left( \frac{(\log \log x)^{n-2}}{\log x} \right)$.

Multiplying \eqref{eq:S1_est} and \eqref{eq:Sn-1_est}, we obtain
\begin{align*}
    \mathcal{C} &= \left( \mathcal{M}_{n-1}(\sqrt{x}) + \mathcal{E}_{n-1}(\sqrt{x}) \right) \left( y_n - \log 2 + O\left(\frac{1}{\log x}\right) \right) \\
    &= \mathcal{M}_{n-1}(\sqrt{x})(y_n - \log 2) + \mathcal{R}_{\mathcal{C}},
\end{align*}
where the remainder $\mathcal{R}_{\mathcal{C}}$ collects the cross-terms:
\[
\mathcal{R}_{\mathcal{C}} = \mathcal{M}_{n-1}(\sqrt{x}) \cdot O\left(\frac{1}{\log x}\right) + \mathcal{E}_{n-1}(\sqrt{x}) \cdot (y_n - \log 2) + \mathcal{E}_{n-1}(\sqrt{x}) \cdot O\left(\frac{1}{\log x}\right).
\]
Since $\mathcal{M}_{n-1}(\sqrt{x}) \asymp (\log \log x)^{n-1}$ and $y_n \asymp \log \log x$, the dominant error contribution comes from the first term:
\[
\mathcal{R}_{\mathcal{C}} \ll \frac{(\log \log x)^{n-1}}{\log x}.
\]
This allows us to state the asymptotic formula for $\mathcal{C}$ as:
\begin{equation}\label{eqC}
    \mathcal{C} = \left( \tau_{n-1}^{(n-1)}(\sqrt{x}) + \sum_{j=0}^{n-3} a_{n-1-j} \tau_{j}^{(n-1)}(\sqrt{x}) \right) (y_n - \log 2) + O\left(\frac{\log^{n-1}(\log x)}{\log x}\right).
\end{equation}
(Note: The polynomials $\tau_j^{(n-1)}(\sqrt{x})$ are evaluated at arguments shifted by $-\log 2$ relative to $\tau_j^{(n-1)}(x)$, which will be handled in the subsequent combination step).

\subsection{Conclusion: Synthesis and Coefficient Verification}

We combine the estimates derived in the previous subsections to form the total sum $\Sigma_n(x) = \mathcal{A} + \mathcal{B} - \mathcal{C}$.

First, we explicitly expand the difference $\mathcal{B} - \mathcal{C}$.
From \eqref{eqB}, the linear component of $\mathcal{B}$ is $y_n \sum_{j=0}^{n-1} a_{n-1-j} \tau_{j}^{(n-1)}(\sqrt{x})$.
From \eqref{eqC}, the main component of $\mathcal{C}$ is $(y_n - \log 2) \sum_{j=0}^{n-1} a_{n-1-j} \tau_{j}^{(n-1)}(\sqrt{x})$.
Subtracting these, the terms involving $y_n$ cancel exactly:
\begin{align}\label{eq:diff_explicit}
    \mathcal{B} - \mathcal{C} &= \log 2 \sum_{j=0}^{n-1} a_{n-1-j} \tau_{j}^{(n-1)}(\sqrt{x}) \\
    &\quad + \sum_{h=1}^{n-1} (-1)^h \operatorname{Li}_{h+1}\left(\frac{1}{2}\right) \mathcal{Q}_{n-1}^{(h)}(y) + O\left(\frac{\log^{n-1}(\log x)}{\log x}\right). \notag
\end{align}

Now we add $\mathcal{A}$ (from \eqref{eqA}) to this expression. We decompose $\Sigma_n(x)$ into a part linear in $y_n$ and a constant part $\mathcal{K}(x)$ (with respect to $y_n$).

\textbf{1. The Linear Part $\mathcal{L}(x)$.}
The only term involving $y_n$ comes from the main part of $\mathcal{A}$. The $y_n$ terms in $\mathcal{B}$ and $\mathcal{C}$ have cancelled as shown above. Thus:
\[
    \mathcal{L}(x) = y_n \sum_{j=0}^{n-1} a_{n-1-j} \tau_{j}^{(n-1)}(x).
\]
Using the recurrence relation $\tau_k^{(n)}(x) = \tau_k^{(n-1)}(x) + y_n \tau_{k-1}^{(n-1)}(x)$, this term correctly generates the upper degree components of the target polynomials.

\textbf{2. The Constant Part $\mathcal{K}(x)$ and Verification of $a_k$.}
The remaining terms collect all contributions that do not multiply $y_n$.
From $\mathcal{A}$, we have the constant term $-\log 2 \sum_{j=0}^{n-1} a_{n-1-j} \tau_{j}^{(n-1)}(x)$ and the integral terms.
Combining this with the first term of \eqref{eq:diff_explicit}, we form the boundary difference:
\[
    -\log 2 \sum_{j=0}^{n-1} a_{n-1-j} \left( \tau_{j}^{(n-1)}(x) - \tau_{j}^{(n-1)}(\sqrt{x}) \right).
\]
Thus, the full expression for $\mathcal{K}(x)$ is:
\begin{align*}
    \mathcal{K}(x) &= \underbrace{\sum_{j=0}^{n-1} a_{n-1-j} \sum_{i=1}^{j+1} \binom{n-1-j+i}{i} \tau_{j+1-i}^{(n-1)}(x) \mathcal{I}_i}_{\text{Integral terms from } \mathcal{A}} \\
    &\quad \underbrace{- \log 2 \sum_{j=0}^{n-1} a_{n-1-j} \left( \tau_{j}^{(n-1)}(x) - \tau_{j}^{(n-1)}(\sqrt{x}) \right)}_{\text{Boundary terms from } \mathcal{A} \text{ and } \mathcal{B}-\mathcal{C}} \\
    &\quad + \underbrace{\sum_{h=1}^{n-1} (-1)^h \operatorname{Li}_{h+1}\left(\frac{1}{2}\right) \mathcal{Q}_{n-1}^{(h)}(y)}_{\text{Polylogarithm terms}}.
\end{align*}

We must prove that $\mathcal{K}(x) = \sum_{j=0}^{n-2} a_{n-j} \tau_j^{(n-1)}(x)$. To do this, we extract the total coefficient of the basis polynomial $\tau_{n-1-k}^{(n-1)}(x)$ for a fixed $k \ge 1$. Let $\Theta_k$ denote this coefficient.

We analyze the contribution to $\Theta_k$ from each of the three parts:

\begin{enumerate}
    \item \textbf{From Integrals:} The term $\tau_{n-1-k}^{(n-1)}(x)$ appears when the summation index satisfies $j+1-i = n-1-k$, i.e., $i = k - (n-1-j)$. Letting $r = k - (n-1-j)$, the coefficient is:
    \[ \sum_{r=1}^k \binom{k}{r} a_{k-r} \mathcal{I}_r. \]

    \item \textbf{From Boundary Terms:} We use the polynomial translation formula. Since $y(\sqrt{x}) = y(x) - \log 2$, we have:
    \[ \tau_j^{(n-1)}(\sqrt{x}) = \sum_{v=0}^j \binom{n-1-j+v}{v} \tau_{j-v}^{(n-1)}(x) (-\log 2)^v. \]
    Substituting this into the boundary sum, the term corresponding to $\tau_{n-1-k}^{(n-1)}(x)$ arises when $j-v = n-1-k$, i.e., $v = k - (n-1-j)$. The coefficient calculation involves the term $\log 2 \cdot (-\log 2)^r$. Since $\log 2 = -(-\log 2)$, this becomes $-(-\log 2)^{r+1}$. Thus, the coefficient is:
    \[ \sum_{r=1}^{k} \binom{k}{r} a_{k-r} \left[ -(-\log 2)^{r+1} \right] = - \sum_{r=1}^{k} \binom{k}{r} a_{k-r} (-\log 2)^{r+1}. \]
    (Note: The formal summation index $r$ starts from 1 because the $v=0$ term cancels in the difference).

    \item \textbf{From Polylogarithms:} The derivative $\mathcal{Q}_{n-1}^{(h)}(y)$ extracts coefficients $a_t$. For the specific term $\tau_{n-1-k}^{(n-1)}$, this yields $(-1)^k a_0 \operatorname{Li}_{k+1}(1/2)$.
\end{enumerate}

Summing these explicit contributions, the condition $\Theta_k = a_k$ becomes the following identity:
\begin{equation}\label{eq:ak_explicit}
    a_k = \sum_{r=1}^{k} \binom{k}{r} a_{k-r} \mathcal{I}_r - \sum_{r=1}^{k} \binom{k}{r} a_{k-r} (-\log 2)^{r+1} + (-1)^k a_0 \operatorname{Li}_{k+1}\left(\frac{1}{2}\right).
\end{equation}

We now verify that our definition of $a_k$ satisfies this.
Recall Proposition \ref{prop4} for $\mathcal{I}_r$. To ensure the cancellation holds, we note the expansion:
\[
    \mathcal{I}_r = (-\log 2)^{r+1} + (-1)^r r! \zeta(r+1) + (-1)^{r-1} \sum_{s=1}^{r} \binom{r}{s} (\log 2)^{r-s} \operatorname{Li}_{s+1}\left(\frac{1}{2}\right).
\]
Substitute this into the first sum of \eqref{eq:ak_explicit}:
\begin{itemize}
    \item The terms $(-\log 2)^{r+1}$ from $\mathcal{I}_r$ (which are positive in the integral expansion) exactly cancel the boundary terms $-(-\log 2)^{r+1}$ (which are negative) in the second sum.
    \item The polylogarithmic terms in $\mathcal{I}_r$ combine via binomial identities to cancel the isolated $\operatorname{Li}_{k+1}(1/2)$ term from the third part.
\end{itemize}
After these cancellations, only the zeta function terms remain. The equation simplifies to:
\[
    a_k = \sum_{r=1}^{k} \binom{k}{r} a_{k-r} (-1)^r r! \zeta(r+1).
\]
Letting $i = r-1$ and noting $a_1=0$, this rearranges to:
\[
    a_k = (-1)^{k-1}(k-1)!\zeta(k) + \sum_{i=1}^{k-3} (-1)^i \binom{k-1}{i} i! \zeta(i+1) a_{k-1-i}.
\]
This matches the recursive definition of $a_k$ in Theorem \ref{thm1.4}.

\textbf{Final Result.}
Combining $\mathcal{L}(x)$ and the verified $\mathcal{K}(x) = \sum a_{n-j} \tau_j^{(n-1)}(x)$, we obtain:
\begin{align*}
    \Sigma_n(x) &= \mathcal{L}(x) + \sum_{j=0}^{n-2} a_{n-j} \tau_{j}^{(n-1)}(x) \\
    &= \sum_{j=0}^{n-2} a_{n-j} \left( \tau_j^{(n-1)}(x) + y_n \tau_{j-1}^{(n-1)}(x) \right) + \tau_n^{(n)}(x) \\
    &= \tau_n^{(n)}(x) + \sum_{j=0}^{n-2} a_{n-j} \tau_{j}^{(n)}(x) + O\left(\frac{\log^{n-1}(\log x)}{\log x}\right).
\end{align*}
This completes the proof.
\section{The Analytic Structure of the Coefficients}

In Theorem \ref{thm1.4}, the coefficients $a_k$ and the polynomials $\tau_n^{(n)}(x)$ arise from an elementary inductive process involving combinatorial recurrence relations. In this section, we employ the generating function method within the framework of the Selberg-Delange theory \cite{ref_delange, ref_selberg} to derive their closed-form representations. We demonstrate that the combinatorially defined sequence $a_k$ aligns with the Taylor coefficients of the reciprocal Gamma function, $1/\Gamma(z)$. This analytic perspective bridges the gap between the elementary and analytic approaches, providing a structural explanation for the lower-order terms derived in the previous sections.

\subsection{The Coefficients $a_k$ and the Gamma Function Structure}

The sequence $a_k$, which governs the lower-order terms of the asymptotic expansion, was defined recursively in Theorem \ref{thm1.4}. Explicitly, for $k \ge 4$, it satisfies
\begin{equation}\label{eq:ak_recurrence}
    a_k = (-1)^{k-1}(k-1)!\zeta(k) + \sum_{i=1}^{k-3} (-1)^i \binom{k-1}{i} i! \zeta(i+1) a_{k-1-i},
\end{equation}
with initial values $a_0=1, a_1=0, a_2=-\zeta(2),$ and $a_3=2\zeta(3)$. 

While this definition appears combinatorial, we now establish its analytic nature. We show that the exponential generating function of $a_k$ is, in fact, a regularization of the reciprocal Gamma function. Specifically, it corresponds to $1/\Gamma(1+z)$ with the Euler-Mascheroni constant $\gamma$ removed from its canonical product expansion.

\begin{thm}[Generating Function for $a_k$]\label{thm:ak_generating}
Let $\mathcal{A}(z) = \sum_{k=0}^{\infty} \frac{a_k}{k!} z^k$ be the exponential generating function of the sequence $\{a_k\}_{k \ge 0}$. Then $\mathcal{A}(z)$ is given by the modified reciprocal Gamma function:
\begin{equation}
    \mathcal{A}(z) = \frac{e^{-\gamma z}}{\Gamma(1+z)} = \exp\left( \sum_{m=2}^{\infty} \frac{(-1)^{m-1} \zeta(m)}{m} z^m \right).
\end{equation}
\end{thm}

\begin{proof}
    The recurrence relation \eqref{eq:ak_recurrence} can be translated into a differential equation for the generating function $\mathcal{A}(z)$. Differentiating $\mathcal{A}(z)$ and applying the convolution structure of the sum, we obtain:
    \[
    \frac{\mathcal{A}'(z)}{\mathcal{A}(z)} = \sum_{m=2}^{\infty} (-1)^{m-1} \zeta(m) z^{m-1}.
    \]
    Integrating term by term with the initial condition $\mathcal{A}(0)=a_0=1$ yields:
    \[
    \log \mathcal{A}(z) = \sum_{m=2}^{\infty} \frac{(-1)^{m-1} \zeta(m)}{m} z^m.
    \]
    Recall the classical logarithmic expansion of the Gamma function near $z=1$:
    \[ \log \Gamma(1+z) = -\gamma z + \sum_{m=2}^{\infty} \frac{(-1)^m \zeta(m)}{m} z^m. \]
    Comparing the two series, we deduce the identity $\log \mathcal{A}(z) = -\gamma z - \log \Gamma(1+z)$, which implies $\mathcal{A}(z) = e^{-\gamma z}/\Gamma(1+z)$.
\end{proof}

\begin{rem}
    Theorem \ref{thm:ak_generating} elucidates the analytic origin of the coefficients $a_k$ appearing in the lower-order terms of the multiple Mertens estimates. The connection to $1/\Gamma(1+z)$ reflects the underlying singularity structure characteristic of the Selberg-Delange method. Furthermore, by applying Fa\`a di Bruno's formula to this generating function, the coefficients $a_k$ admit an explicit combinatorial expression in terms of the partial Bell polynomials $B_{k,j}$  (see Comtet \cite{ref_comtet} for precise definitions). Specifically, we have:
    \begin{equation}
        a_k = \sum_{j=1}^{k} B_{k,j}(x_1, x_2, \dots, x_{k-j+1}),
    \end{equation}
    where the arguments are defined by $x_1 = 0$ and $x_m = (-1)^{m-1}(m-1)!\zeta(m)$ for $m \ge 2$.
\end{rem}

\subsection{The Analytic Origin of $\tau_n^{(n)}(x)$}

We now turn to the main term polynomials $\tau_n^{(n)}(x)$. We show that their construction via symmetric polynomials is a direct consequence of the factorization of the underlying Dirichlet series.

Consider the generating Dirichlet series for the sum in Theorem \ref{thm1.4}:
\begin{equation}
    \mathcal{F}(s) = \sum_{p_1, \dots, p_n} \frac{\prod_{i=1}^n \mathbf{1}_{h_i, m_i}(p_i)}{(p_1 \cdots p_n)^s} = \prod_{i=1}^n P_i(s),
\end{equation}
where $P_i(s) = \sum_{p \equiv h_i \pmod{m_i}} p^{-s}$. Utilizing the orthogonality of Dirichlet characters and the properties of $L(s, \chi)$, the series $P_i(s)$ exhibits a logarithmic singularity at $s=1$(see, e.g., Tenenbaum \cite[Chapter II.5]{ref27} or Montgomery and Vaughan \cite{ref26}):
\begin{equation}\label{eq:Pi_structure}
    P_i(s) = \frac{1}{\varphi(m_i)} \log \frac{1}{s-1} + A_{h_i, m_i} + H_i(s),
\end{equation}
where $H_i(s)$ is holomorphic at $s=1$ with $H_i(1)=0$.

Substituting \eqref{eq:Pi_structure} into $\mathcal{F}(s)$, the function $\mathcal{F}(s)$ possesses a logarithmic branch point of order $n$. The principal singular part is given by:
\begin{equation}\label{eq:F_expansion}
    \mathcal{F}_{\text{sing}}(s) = \frac{1}{\prod_{i=1}^n \varphi(m_i)} \prod_{i=1}^n \left( \log \frac{1}{s-1} + C_i \right),
\end{equation}
where $C_i = \varphi(m_i)A_{h_i, m_i}$.

The Selberg-Delange method relates the coefficients of the Dirichlet series to the asymptotic behavior of its summatory function via an integral operator $\mathcal{J}$ (essentially the inverse Mellin transform over a Hankel contour). The fundamental mapping property of this operator is given by:
\begin{equation}\label{eq:J_mapping}
    \mathcal{J} \left\{ \left( \log \frac{1}{s-1} \right)^k \right\} (x) \sim (\log \log x)^k.
\end{equation}

To rigorously derive the form of $\tau_n^{(n)}(x)$, we apply this operator to the singular expansion \eqref{eq:F_expansion}. Let $Z = \log \frac{1}{s-1}$ denote the singularity variable, and $X = \log \log x$ denote the asymptotic variable.

First, we expand the product of singularities into a polynomial in $Z$. By the algebraic definition of elementary symmetric polynomials, we have:
\begin{equation}\label{eq:Z_expansion}
    \prod_{i=1}^n (Z + C_i) = \sum_{j=0}^n e_{j}(C_1, \dots, C_n) Z^{n-j},
\end{equation}
where $e_j(C_1, \dots, C_n)$ is the elementary symmetric polynomial of degree $j$ in the variables $C_i = \varphi(m_i)A_{h_i, m_i}$.

Crucially, the Selberg-Delange operator $\mathcal{J}$ is linear. Applying $\mathcal{J}$ to \eqref{eq:Z_expansion} term-by-term, and utilizing the mapping property $\mathcal{J}\{Z^k\} \to X^k$, we obtain the asymptotic main term:
\begin{align}\label{eq:mapping_process}
    \text{Main Term} &\sim \frac{1}{\prod \varphi(m_i)} \mathcal{J} \left\{ \sum_{j=0}^n e_{j}(C_1, \dots, C_n) Z^{n-j} \right\} \nonumber \\
    &= \frac{1}{\prod \varphi(m_i)} \sum_{j=0}^n e_{j}(C_1, \dots, C_n) \mathcal{J} \{ Z^{n-j} \} \nonumber \\
    &= \frac{1}{\prod \varphi(m_i)} \sum_{j=0}^n e_{j}(C_1, \dots, C_n) X^{n-j}.
\end{align}

Comparing \eqref{eq:mapping_process} with the definition of $\mu_j^{(n)}$ given in Definition \ref{rek1}, we identify a precise correspondence:
\[ \mu_j^{(n)} = \frac{1}{\prod \varphi(m_i)} e_{j}(C_1, \dots, C_n). \]

This derivation elucidates the origin of the coefficients: $\mu_j^{(n)}$ are not arbitrary combinatorial definitions. Rather, they correspond to the elementary symmetric functions of the set of shifts $\{C_1, \dots, C_n\}$ arising from the local expansions of the component Dirichlet $L$-functions. The transition from the complex $s$-plane to the real $x$-domain preserves this symmetric structure due to the polynomial character of the logarithmic singularity.
\begin{rem}
    This analysis bridges the elementary and analytic approaches. The inductive proof in Section 3, which constructs $\tau_n^{(n)}(x)$ step-by-step, corresponds analytically to the successive multiplication of the logarithmic singularities associated with the component Dirichlet series. Thus, the analytic view confirms that the resulting polynomial structure is uniquely determined by the local behavior of the underlying Dirichlet $L$-functions at $s=1$.
\end{rem}
\normalsize
\baselineskip=17pt

\end{document}